\documentclass{article}

\usepackage{amssymb}
\usepackage{amsmath}
\usepackage{amsthm}
\usepackage[utf8]{inputenc}
\usepackage{graphicx}
\usepackage{xcolor,url}

\newtheorem{theorem}{Theorem}
\newtheorem{corollary}{Corollary}

\newtheorem{lemma}{Lemma}
\theoremstyle{definition}
\newtheorem{remark}{Remark}

\title{Counting locally flat-foldable origami configurations via
3-coloring graphs}
\author{Alvin Chiu\thanks{Georgia Institute of Technology, Atlanta, GA, \protect\url{achiu32@gatech.edu}} 
\and
 William Hoganson\thanks{Swarthmore College, Swarthmore, PA, \protect\url{whogans1@swarthmore.edu}}
 \and
 Thomas C. Hull\thanks{Western New England University, Springfield, MA, \protect\url{thull@wne.edu}}
 \and
 Sylvia Wu\thanks{Clemson University, Clemson, SC, \protect\url{sylviaw@g.clemson.edu}}}
\date{}

\begin{document}

\maketitle

\begin{abstract}
Origami, where two-dimensional sheets are folded into complex structures, is proving to be rich with combinatorial and geometric structure, most of which remains to be fully understood. In this paper we consider \emph{flat origami}, where the sheet of material is folded into a two-dimensional object, and consider the mountain (convex) and valley (concave) creases made by such foldings, called a \emph{MV assignment} of the crease pattern.  
We establish a method to, given a flat-foldable crease pattern $C$ under certain conditions, create a planar graph $C^*$ whose 3-colorings are in one-to-one correspondence with the locally-valid MV assignments of $C$.  This reduces the general, unsolved problem of enumerating locally-valid MV assignments to the enumeration of 3-colorings of graphs.
\end{abstract}

\section{Introduction}

Origami is the art of folding paper, allowing the transformation of flat pieces of paper into two- or three-dimensional shapes. Enumerating the different ways to fold a flat sheet along straight crease lines into a flat object is an intriguing, open problem with applications in math, physics, and engineering \cite{Assis,Evans,DiF}. 

Whenever paper is folded, the resulting line of the fold is a $\textit{crease}$. An $\textit{origami crease pattern } (C, P)$ is a straight-line embedding of a planar graph $C = (V(C), E(C))$ on a region $P$ of $\mathbb{R}^2$, where the edges of $C$ 
are the creases of the folded paper. 
Here we assume that $P$ is bounded and the graph $C$ is finite.
Unless the shape of $P$ is important we will refer to a crease pattern as just $C$. Giving the paper an arbitrary ``top" and ``bottom" when folding along the creases, each crease will either be a $\textit{mountain crease}$ (it bends the paper in a convex direction) or $\textit{valley crease}$ (it bends the paper in a concave direction). Thus, the folded state of the paper is a \textit{mountain-valley (MV) assignment} on $C$, which is a function $\mu: E(C) \to \{-1,1\}$, where $\mu(c) = 1$ if the crease $c$ is a mountain crease and $\mu(c) = -1$ if it is a valley crease.

We call a MV assignment $\mu$ \textit{valid} if it can be used to physically fold $(C,P)$ flat such that the model could be pressed between the pages of a book with no self intersections or new creases being made. A crease pattern $C$ is \textit{(globally) flat-foldable} if there exists a valid MV assignment $\mu$ on $C$. This notion of global flat foldability tells us if it is possible to fold the paper flat using all of the creases in the crease pattern. Enumerating all globally valid MV assignments $\mu$ on a given flat-foldable origami crease pattern $C$ with multiple vertices is an open problem \cite{miura}. Just determining the global flat foldability of a specific MV assignment is a difficult problem since the size of the faces in a crease pattern dictates whether or not the layers of the paper will collide. In fact, global flat-folability has been proven to be NP-hard \cite{Bern}, even for simplified crease patterns \cite{boxpleat}. We will focus specifically on MV assignments that are \textit{locally-valid}, meaning every vertex in the set $V(C)$ is valid individually. We call a crease pattern $C$ \textit{locally flat-foldable} if there exists a locally-valid MV assignment $\mu$ on $C$. 

By definition, a single-vertex crease pattern is globally flat-foldable if and only if it is locally flat-foldable. In fact, there is a linear-time algorithm for determining the number of valid MV assignments given any single-vertex crease pattern \cite{countingMV}.  However, computing the number of locally-valid MV assignments for general crease patterns with many vertices is open. A significant advance in this area was given by Ginepro and Hull in \cite{miura}, where the number of locally-valid MV assignments of the $m\times n$ Miura-ori crease pattern (see Figure~\ref{fig:Miura}) was shown to be equal to the number of proper 3-vertex colorings of an $m\times n$ grid graph with one vertex pre-colored.  

In this paper, we expand this result to a wide class of flat-foldable crease patterns.  Specifically, if $C$ is a flat-foldable crease pattern where each vertex satisfies a recursive constraint we call \emph{3-nice} (defined in Section~\ref{sec:backgrounnd}), then we can find a corresponding graph $C^*$ such that the number of locally-valid MV assignments of $C$ equals the number of proper 3-colorings, with one vertex pre-colored, of $C^*$. As a consequence, we prove that several families of crease patterns have the same number of locally-valid MV assignments as the $m\times n$ Miura-ori. Further, the 3-nice property includes all degree-4 flat-foldable vertices, implying that this color correspondence works for all 4-regular crease patterns.
This allows us to reduce our enumeration problem into the more extensively-studied problem of enumerating graph colorings.  It also provides very strong evidence that the combinatorial structure underlying locally-valid MV assignments is 3-colorings of graphs.

\section{Background and notation}\label{sec:backgrounnd}

For background on flat-foldable crease patterns see \cite{boxpleat,miura,countingMV,Hull2}.  We summarize here three main conditions that a single-vertex crease pattern must satisfy in order to fold flat as well as a recursive algorithm for folding such vertices.

\begin{theorem}
\textup{(Kawasaki \cite{countingMV})} Let $(C, P)$ be an origami crease pattern where $C$ has only one vertex $v$ in the interior of $P$ and all edges in $C$ are adjacent to $v$. Let $\alpha_1, \dotsc , \alpha_k$ be the sector angles, in order, between the consecutive edges around v. Then the crease pattern (C,P) is flat-foldable if and only if k = 2n is even and
$$\alpha_1 - \alpha_2 + \alpha_3 - \cdots - \alpha_{2n} = 0.$$
\end{theorem}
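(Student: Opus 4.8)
The plan is to prove both implications by analyzing the structure of the flat-folded state in a small punctured disk around the single interior vertex $v$. Place $v$ at the origin, label the creases $c_1, \dotsc, c_k$ in cyclic order at angles $\gamma_1 < \cdots < \gamma_k$, so that the sector of angle $\alpha_i$ lies between $c_{i-1}$ and $c_i$ with $\alpha_i = \gamma_i - \gamma_{i-1}$. A flat folding restricts on each open sector $S_i$ to a rigid motion $g_i$ of $\mathbb{R}^2$ (an isometry onto its planar image). Because the paper's two sides alternate as one crosses a genuine crease, consecutive $g_i$ have opposite orientation, and continuity of the folding along $c_i$ forces $g_{i+1} = g_i \rho_i$, where $\rho_i$ denotes reflection across the line through $c_i$.

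For necessity I would iterate this relation to obtain $g_{i+1} = g_1 \rho_1 \rho_2 \cdots \rho_i$. Traversing the entire link of $v$ must return the folding to the sector $S_1$ where it started, which gives the monodromy condition $\rho_1 \rho_2 \cdots \rho_k = \mathrm{id}$. Taking determinants yields $(-1)^k = 1$, so $k = 2n$ is even. Writing each $\rho_i$ as the reflection $M(\gamma_i)$ and using that a product of two reflections is a rotation, the left-hand side telescopes to a single rotation by $-2(\alpha_2 + \alpha_4 + \cdots + \alpha_{2n})$; setting this equal to the identity forces $\alpha_2 + \alpha_4 + \cdots + \alpha_{2n} \equiv 0 \pmod{\pi}$. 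Since each $\alpha_i > 0$ and $\sum_i \alpha_i = 2\pi$, this even-indexed sum lies strictly in $(0, 2\pi)$ and must therefore equal exactly $\pi$, which is equivalent to $\alpha_1 - \alpha_2 + \cdots - \alpha_{2n} = 0$.

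For sufficiency I would reverse the construction. Assuming $k = 2n$ and that the alternating sum vanishes, set $g_1 = \mathrm{id}$ and define $g_{i+1} = g_i \rho_i$ recursively; the Kawasaki relation guarantees $\rho_1 \cdots \rho_{2n} = \mathrm{id}$, so $g_{2n+1} = g_1$ and the piecewise map $f = g_i$ on $S_i$ is well defined and continuous across every crease. Each piece is an isometry into the plane, so $f$ is a flat folding of the neighborhood; alternatively, one may build the same folding constructively via the recursive ``crimp'' procedure that folds the two creases bounding a locally smallest sector and reduces to a vertex with two fewer creases still satisfying the hypotheses.

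The main obstacle is making the geometric bookkeeping fully rigorous rather than the computation itself: one must pin down the definition of ``folds flat'' as a continuous, piecewise-isometric map, justify that a genuine fold forces the orientation flip (so that $g_{i+1} = g_i\rho_i$ and not $g_{i+1} = g_i$), and, for sufficiency, confirm that the resulting layered image admits a consistent non-crossing stacking order. For a single vertex this last point is classical, but it is precisely where ``maps isometrically into the plane'' must be upgraded to ``is a physically valid flat-folded state,'' so I would establish the existence of a valid layer ordering carefully or invoke the recursive folding algorithm summarized above.
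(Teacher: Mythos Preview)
The paper does not prove this theorem. Kawasaki's Theorem is stated in Section~2 as background, attributed to the literature via a citation, and immediately followed by the remark that the paper only investigates crease patterns which already satisfy it. There is no argument to compare your proposal against.

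For what it is worth, your outline is the standard reflection--monodromy proof and is essentially correct. The necessity direction is clean: the closure condition $\rho_1\cdots\rho_k=\mathrm{id}$ forces $k$ even via determinants, and pairing consecutive reflections into rotations yields the alternating-sum identity after the range observation you give. For sufficiency you correctly flag the one genuine subtlety: the piecewise-isometric map you build from $g_{i+1}=g_i\rho_i$ is only a ``flat map,'' and upgrading it to a physically valid flat-folded state requires a consistent layer ordering. Your fallback to the recursive crimp (fold away a locally smallest sector and induct on the number of creases) is exactly the classical way to supply that layer ordering, so if you were to write this out in full, that is the route to take rather than trying to exhibit a stacking order directly for the reflection-defined map.
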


In this paper, we only investigate crease patterns which can fold flat, making satisfying Kawasaki's Theorem a precondition for all crease patterns considered.

The next theorems describe conditions that must be satisfied for a particular single-vertex MV assignment to fold flat. These are central to the goal of enumerating all locally-valid MV assignments.

\begin{theorem}
\textup{(Maekawa \cite{countingMV})} Let $v$ be a vertex in a flat-foldable crease pattern $C$ with a valid MV assignment $\mu$ and let $E(v)$ be the set of crease edges adjacent to $v$. Then 
$$\sum_{c \in E(v)} \mu(c) = \pm 2.$$
\end{theorem}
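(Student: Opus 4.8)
The plan is to reduce the claim to a statement about total curvature of the image of a small loop around $v$. Writing $M$ and $V$ for the number of mountain and valley creases in $E(v)$, we have $M+V = 2n$ (the number of creases, which is even by Kawasaki's Theorem), and $\sum_{c\in E(v)}\mu(c) = M-V$, so it suffices to prove $M - V = \pm 2$. First I would fix a flat folding realizing $\mu$ and let $f$ denote the folding map, sending the paper near $v$ into the plane with $f(v)=v'$. Restricting $f$ to a small circle $S$ of radius $\epsilon$ centered at $v$ gives a closed curve $\gamma = f(S)$. The two facts I would establish about $f$ are that on each angular sector it is a planar isometry, and that consecutive sectors are related by a reflection across their shared crease, so the isometries alternate between orientation-preserving and orientation-reversing as one travels around $v$. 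In particular $f$ preserves distance to $v$, so $\gamma$ lies on the circle $\sigma$ of radius $\epsilon$ about $v'$ and oscillates back and forth along it, reversing direction at the image of each crease.

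Next I would compute the total signed curvature (total turning) of $\gamma$ by splitting it into a smooth part coming from the sector arcs and a concentrated part coming from the creases. On an orientation-preserving sector the arc traverses $\sigma$ in the positive direction, contributing $+\alpha_i$ to the turning, and on an orientation-reversing sector it contributes $-\alpha_i$. Summing over all sectors gives $\pm(\alpha_1 - \alpha_2 + \alpha_3 - \cdots - \alpha_{2n})$, which vanishes by Kawasaki's Theorem; this is precisely the step where the hypothesis of flat-foldability enters. The concentrated part comes from the $2n$ cusps where $\gamma$ reverses on $\sigma$, each contributing an exterior angle of $\pm\pi$. Here I would argue that a mountain crease and a valley crease produce cusps of opposite sign, so that the cusp contribution to the total turning is exactly $\pi(M-V)$. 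Combining the two parts, the total turning of $\gamma$ equals $\pi(M-V)$.

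Finally, since the total turning of any closed planar curve (allowing cusps) is an integer multiple of $2\pi$, I would write the total turning as $2\pi T$ for the turning number $T$, yielding
\[
M - V = 2T.
\]
It remains to pin down $T = \pm 1$, and this is the step I expect to be the main obstacle. Merely knowing $T\in\mathbb{Z}$ only recovers that $M-V$ is even; indeed, for a folded \emph{strip} an alternating accordion gives $T=0$, so the value $\pm 1$ must come from the fact that $S$ bounds a genuine disk of paper whose folded image is an immersed disk. I would justify $|T|=1$ by a degree or winding argument for the piecewise-isometric folding map $f$ on the disk, using that folding a flat disk cannot create global winding in its boundary image. An alternative, more combinatorial route to the same conclusion is induction on $n$: one locates a sector that is a strict local minimum, shows its two bounding creases must receive opposite assignments (so their contribution to $M-V$ is $0$), folds that sector flat to obtain a flat-foldable vertex with $2n-2$ creases satisfying Kawasaki, and applies the inductive hypothesis; the base case $n=1$ gives $M-V=\pm 2$ directly. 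Either route closes the argument, and I regard establishing that the local folding contributes exactly $\pm 2$ (equivalently $|T|=1$) as the crux of the proof.
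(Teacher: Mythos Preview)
The paper does not actually prove Maekawa's Theorem; it is stated as a background result with a citation to \cite{countingMV} and then used freely throughout. So there is no ``paper's own proof'' to compare against, and your proposal is being measured against a statement the authors simply import.

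On the merits of your sketch: the turning-number route is a known approach, and you correctly identify that the real content lies in pinning down $|T|=1$. Your first proposed closure---a degree/winding argument using that the folded disk is ``an immersed disk''---is shaky as written: the flat-folding map is not an immersion (it is only piecewise isometric, with creases where it fails to be $C^1$), and the image is a layered, many-to-one object, so Whitney--Graustein style reasoning does not apply directly. Your second route, the Big-Little-Big induction, is sound and is essentially how the result is handled in the sources the paper cites.

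There is, however, a much shorter classical argument that avoids curvature entirely and that you may want to know. Fold the vertex flat and separate the layers infinitesimally so that the cross-section through $v$ becomes an honest simple closed polygon with $2n$ vertices (one per crease); simplicity is exactly the non-self-intersection hypothesis of a valid flat fold. Each mountain contributes an interior angle of $2\pi$ and each valley contributes $0$ (or vice versa, depending on orientation). The interior-angle sum of a simple $2n$-gon is $(2n-2)\pi$, so $2\pi M + 0\cdot V = (2n-2)\pi$, giving $M=n-1$, $V=n+1$, hence $M-V=-2$ (and the mirror case gives $+2$). This is the proof in Hull's references, and it replaces your delicate $|T|=1$ step with the elementary fact that a simple polygon has turning number $\pm 1$.
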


In flat origami, the \textit{Big-Little-Big Theorem} says that if a flat-foldable vertex has a sector angle $\alpha_i$ that is strictly smaller than its two neighboring angles $\alpha_{i-1}$ and $\alpha_{i+1}$, then the creases surrounding $\alpha_i$ cannot be both mountains or both valleys in any valid MV assignment (otherwise the two large angles would totally cover $\alpha_i$ on the same side of the paper, causing a self-intersection).  We will need the following generalization of this given in \cite{countingMV,Hull2}. 

\begin{theorem}[Big-Little-Big]\label{thm:BLB} 
Let $v$ be a vertex in a flat-foldable crease pattern $C$ with a MV assignment $\mu$. Suppose that we have a local minimum of consecutive equal sector angles $\alpha_i$ between the crease edges $c_i, \dotsc, c_{i+k+1}$ at $v$. That is, $\alpha_i = \alpha_{i+1} = \cdots = \alpha_{i+k}$ where $\alpha_{i-1} > \alpha_i$ and $\alpha_{i+k+1} > \alpha_i$. Then $\mu$ is valid among the creases bordering the angles $\alpha_{i},\cdots, \alpha_{i+k}$ if and only if
$$\sum_{j=i}^{i+k+1} \mu(c_i) = \left\{
	\begin{array}{ll}
		0  & \textrm{if k is even,} \\
		\pm 1 & \textrm{if k is odd.}
	\end{array}
\right.$$
\end{theorem}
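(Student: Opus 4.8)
The plan is to reduce the statement to a one-dimensional layer-ordering problem and then analyze that problem by induction on $k$. The starting observation is that since $\alpha_i = \cdots = \alpha_{i+k}$ are all equal and strictly smaller than their outer neighbors $\alpha_{i-1}, \alpha_{i+k+1}$, in any flat folding these $k+1$ sectors must all be reflected onto a single common angular wedge $W$ bounded by two rays (the images of the creases). The creases $c_i, \dots, c_{i+k+1}$ therefore all lie along the two boundary rays of $W$, alternating between them as the index increases, while the two larger sectors $\alpha_{i-1}$ and $\alpha_{i+k+1}$ protrude beyond $W$. Thus folding the equal-angle block ``accordion style'' is exactly a flat folding of a strip of $k+1$ equal segments onto one segment, with the two protruding sectors pinned to the ends of the strip. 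Checking that $\mu$ is valid among these creases is then equivalent to checking that this strip admits a non-self-intersecting flat folding with the two end-flaps escaping to their respective sides.

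First I would establish the base case $k=0$, which is the classical Big-Little-Big statement: if the two creases $c_i, c_{i+1}$ bounding the single small sector $\alpha_i$ receive the same MV value, then both neighboring larger sectors fold to the same side of the paper over $\alpha_i$ and, being strictly larger, must overlap and force a self-intersection; if they receive opposite values the small sector pokes out freely. Hence validity holds iff $\mu(c_i)+\mu(c_{i+1})=0$, matching the claim for even $k$.

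For the inductive step I would model a flat folding of the strip as a stacking order (a permutation) of the $k+1$ layers, under which the fold ``caps'' lying along each of the two boundary rays of $W$ must be pairwise non-crossing (nested) and the two end-flaps must reach the outside of the stack on their correct sides. I would read off from $\mu$ whether, as one traverses the strip crease by crease, each successive layer is placed above or below the running stack, so that each mountain contributes a step in one direction and each valley a step in the other. The content of the induction is that a consistent non-crossing ordering with both flaps escaping exists precisely when the numbers of mountains and valleys among $c_i, \dots, c_{i+k+1}$ differ by at most one, which is equivalent to the stated sum being $0$ when $k$ is even and $\pm 1$ when $k$ is odd; removing the outermost pair of creases (a ``crimp'') reduces to the case of two fewer sectors while changing the running count in a controlled way, letting me invoke the inductive hypothesis.

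The main obstacle I expect is making the layer-ordering analysis of the inductive step fully rigorous: one must track exactly which of the two boundary rays each crease $c_{i+j}$ lies on (this depends on the parity of $j$) and how that interacts with the mountain/valley value to determine the nesting of the caps, and one must verify that an excess of one MV type by two or more is precisely what forces a flap to be trapped and hence a self-intersection. Carefully combining the non-crossing (``taco'') conditions for the caps with the escape conditions for the two protruding sectors, and confirming that the crimp reduction preserves the sum condition with the correct parity shift, is the delicate part of the argument.
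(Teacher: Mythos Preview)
The paper does not actually prove Theorem~\ref{thm:BLB}; it is quoted as a known result from \cite{countingMV,Hull2} and used without proof, so there is no in-paper argument to compare your proposal against.

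For what it is worth, your plan---reduce the block of equal small angles to a one-dimensional strip/accordion fold, handle $k=0$ directly, and then induct by removing an adjacent mountain--valley pair (a crimp)---is exactly the strategy used in the cited references. The one place where your sketch is still soft is the ``only if'' direction of the inductive step: you need to argue that \emph{every} valid layering of the $k+1$ equal sectors with both end-flaps escaping necessarily contains an adjacent M--V pair that can be crimped away (equivalently, that an imbalance of two or more forces some flap to be trapped). This is true, but it requires a short combinatorial argument about nested caps rather than just the crimp operation itself; once you nail that down the induction closes cleanly.
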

    Therefore, if there are an even number of small equal angles in a row, the number of mountains and number of valleys bordering those small equal angles must differ by 1. If there is an odd number of small equal angles in a row, there must be the same number of mountains and valleys.

Using Theorem~\ref{thm:BLB}, one can create a recursive algorithm to count the number of valid MV assignments of a flat-foldable single-vertex crease pattern \cite{countingMV}.  Since the nature of this recursion is important to our work, we provide a few details of it here.  Let $C_0$ be our single-vertex, flat-foldable crease pattern, and identify a local minimum of consecutive equal sector angles $\alpha_i,\ldots, \alpha_{i+k}$ as per the Big-Little-Big Theorem.  Fold the creases surrounding these angles and fuse the layers of paper together to get a new crease pattern $C_1$ with fewer creases.  (Note that $C_1$ will now be a crease pattern on a cone-shaped piece of paper, but as detailed in $\cite{countingMV,Hull2}$ these single-vertex flat foldability results, like the Big-Little-Big Theorem, apply to crease patterns on cones as well as flat paper, so the recursion can proceed.)  We then repeat, finding a local minimum of consecutive equal sector angles of $C_1$ and fold them to obtain crease pattern $C_2$.  We proceed in this way until a crease pattern $C_z$ is obtained where the sector angles are all equal, in which case any MV assignment that satisfies Maekawa's Theorem will suffice.  Each recursive step allows us to count how many MV assignments will satisfy the application of the Big-Little-Big Theorem, and in this way we can count the number of valid MV assignments in linear time.  

We define a single-vertex, flat-foldable crease pattern $C$ to be \emph{$m$-nice} if every application of the Big-Little-Big Theorem in its folding recursion with $\alpha_i=\cdots=\alpha_{i+k}$, $\alpha_{i-1}>\alpha_i$, and $\alpha_{i+k+1}>\alpha_i$, we have $k\leq m$.  Note that the definition of $m$-nice does not apply to vertices where all the sector angles are equal.  As we will see in Section~\ref{sec:colorbij}, requiring the vertices in our flat-foldable crease patterns to be 3-nice will be necessary for some of our work.  


Together, these results give us a set of conditions by which local flat foldability is dictated. They allow us to understand and find all valid MV assignments for any single-vertex crease pattern. Our goal now is to find the number of all locally-valid MV assignments for any given crease pattern $C$. If we let $M(C)$ be the set of all locally-valid MV assignments $\mu$ on $C$, our goal is to find $\lvert M(C)\rvert$. 

\begin{figure}
    \centering
    \includegraphics[width=\linewidth]{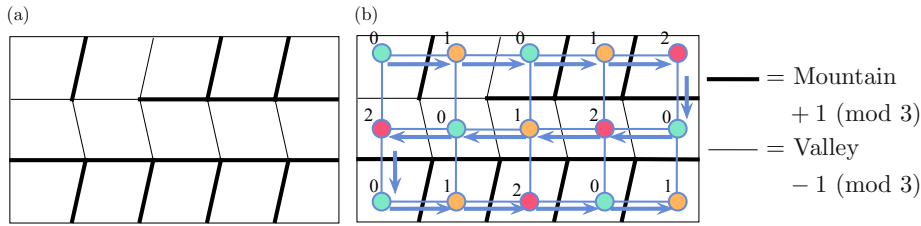}
    \caption{(a) The Miura-ori crease pattern, with a locally-valid MV assignment.  (b) A grid graph superimposed with the bijection scheme illustrated.}
    \label{fig:Miura}
\end{figure}

To do so, we build off of the work of Ginepro and Hull \cite{miura}, which looked at local flat foldability for the Miura-ori crease pattern. In particular, they found a bijection between $M(C)$, where $C$ is a Miura-ori crease pattern made of an $m\times n$ array of parallelograms and $S(C^*)$,  the set of proper $3$-colorings of an $m\times n$ grid graph $C^*$ with one vertex pre-colored. If we overlay $C^*$ on top of $C$ then we can demonstrate the bijection by following a zig-zag path through $C^*$, as shown in Figure~\ref{fig:Miura}(a).  The colors used are the elements of $\mathbb{Z}_3$, and if we pre-color the upper-left vertex with 0, then as we follow the zig-zag path we add 1 (mod 3) to get the next color if we cross a mountain crease and subtract 1 if we cross a valley crease.  For a proof that this is in fact a bijection, see \cite{miura}.

In generalizing this idea, we first establish some notation. Let a proper $3$-coloring of a graph $G = (V, E)$ with one vertex pre-colored be $s: V \to \mathbb{Z}_3$, where $s(u) \neq s(v)$ whenever $\{u, v\}$ is an edge and, arbitrarily, $s(v_0) = 0$. We denote the set of all such proper $3$-colorings $s$ of a graph $G$ as $S(G)$.  

To count locally-valid MV assignments of a flat-foldable crease pattern $(C,P)$ we want to find a graph $C^*$, which we will call the \emph{SAW graph of $C$}, such that there is a bijection $f: M(C) \to S(C^*)$ that maps locally-valid MV assignments of $C$ to proper $3$-colorings with one vertex pre-colored of graph $C^*$. 
To create a (useful) bijection, we want to embed our desired graph $C^*$ onto the paper $P$ and direct some (or perhaps all) of the  edges in $C^*$ satisfying the following conditions:
\begin{itemize}
    \item Each face in $C$ has at least one vertex $v \in V(C^*)$.
    \item For all creases $c_i\in E(C)$ bordering faces $F_i$ and $F_{i+1}$ in $(C,P)$, there must exist two vertices $v_i, v_{i+1}\in V(C^*)$ embedded in $P$ so that $v_i\in F_i$, $v_{i+1}\in F_{i+1}$, and either $(v_i, v_{i+1})$ or $(v_{i+1}, v_i)$ is a directed edge crossing crease $c_i$. 
    \item If $(v_i, v_j)$ is a directed edge in $E(C^*)$ and crosses over a crease $c_i \in E(C)$, then
    $$\mu(c_i) = \left\{
	\begin{array}{ll}
		1  & \textrm{if } s(v_j) - s(v_i) \equiv 1 \pmod{3} \\
		-1 & \textrm{if } s(v_j) - s(v_i) \equiv 2 \pmod{3}
	\end{array}
\right.$$
\end{itemize}
The conditions governing directed edges yield a mapping from any proper $3$-coloring to a (hopefully valid) MV assignment and vice versa. 

For example, the $m \times n$ grid graph in Figure~\ref{fig:Miura} serves as the SAW graph of the $m\times n$ Miura-ori tessellation, as proved in \cite{miura}.
 
Note that given a locally-valid MV assignment of $C$, the directed edges in $C^{*}$ that assign colors based on the assignment of creases do not necessarily color all vertices in the SAW graph $C^{*}$. In order for there to be a bijection from $M(C)$ to $S(C^*)$, the vertices colored by the directed  edges must force one and only one proper coloring on the remaining vertices. 

In the case where $C$ is a single-vertex crease pattern, we will impose an additional condition that in the embedding of $C^*$ on the paper $P$, all the directed edges of $C^*$ will border the outside face of $C^*$. For this reason, we will refer to a directed edge in a single-vertex SAW graph $C^{*}$ as a \textit{boundary edge}. (This will be useful in Section~\ref{sec:tiling} when we tile single-vertex SAW graphs to make SAW graphs for larger crease patterns.)  Therefore, when we color only the vertices on boundary edges of a single-vertex SAW graph $C^*$, this has to determine a unique proper $3$-coloring of $C^{*}$ by forcing a coloring on any interior vertices.
We proceed by finding SAW graphs for single-vertex crease patterns.

\section{Coloring bijections for single-vertex folds}\label{sec:colorbij}

\begin{figure}
    \centering
    \includegraphics[width=\linewidth]{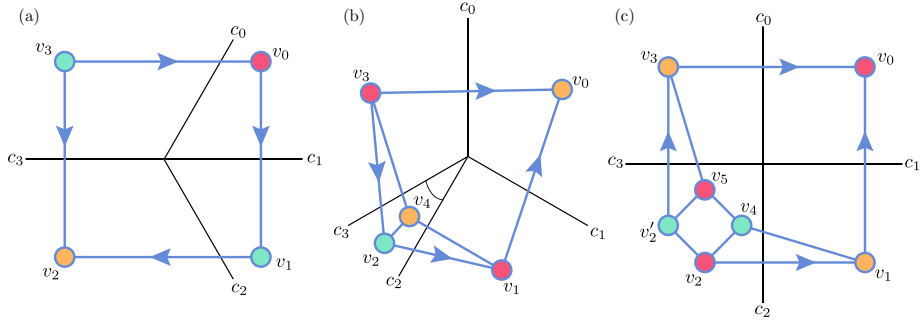}
    \caption{(a) SAW graph for a degree-4 Miura-ori vertex (the so-called bird's foot).  (b) SAW graph for a degree-4, big-little-big vertex. (c) SAW graph for a degree-4 vertex with all $90^\circ$ angles.}
    \label{fig:deg4}
\end{figure}


Degree-4 single vertex crease patterns come in three types, shown in Figure~\ref{fig:deg4}.  The so-called bird's foot (Figure~\ref{fig:deg4}(a)) is made of two congruent acute and two congruent obtuse angles; it is the vertex used in the Miura-ori crease pattern.  If the vertex has one smallest sector angle, then we call it a Big-Little-Big (or BLB for short) vertex (Figure~\ref{fig:deg4}(b)).  The last case is where all the sector angles equal $90^\circ$ (Figure~\ref{fig:deg4}(c)).  As shown in \cite{countingMV,Hull2}, degree-4 BLB vertices have 4 valid MV assignments, bird's feet have 6, and the all-equal-angles vertex has 8.  That is, $|M(C)|=4$, 6, and 8 for these vertices.

\begin{theorem}\label{thm:deg4}
The graphs shown in Figure~\ref{fig:baseCaseEx} are SAW graphs for the three types of degree-4 single vertex crease patterns.
\end{theorem}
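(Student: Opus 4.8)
The plan is to treat each of the three vertex types separately: in each case I take the graph $C^*$ drawn in the figure together with its prescribed embedding in $P$, the orientations of its boundary edges, and any undirected edges, and then exhibit an explicit bijection $f\colon M(C)\to S(C^*)$. Since a degree-4 vertex has only four creases, both $M(C)$ and $S(C^*)$ are small finite sets (of sizes $4$, $6$, and $8$ by the counts quoted above), so the argument is largely a structural verification that the three SAW-graph conditions hold, backed where convenient by direct enumeration.

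First I would define the forward map. Given a locally-valid MV assignment $\mu$, color the pre-colored vertex $0$ and propagate colors along the boundary edges by the prescribed rule: crossing a boundary edge $(v_i,v_j)$ in its forward direction adds $+1\pmod 3$ if the crease crossed is a mountain and $-1\pmod 3$ if it is a valley. The content is that this is well defined, i.e. path-independent. Because every jump is $\pm1\pmod 3$ and hence nonzero, the induced coloring is automatically proper on the colored vertices, so the only thing to check is that around each independent cycle of $C^*$ the signed color increments sum to $0\pmod 3$. I would show this cycle condition is exactly equivalent to the Maekawa and Big-Little-Big constraints (Theorem~\ref{thm:BLB}) for the given angle configuration. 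For the bird's foot, for example, orienting the edge across the crease opposite the two small angles against the cyclic direction converts the single $4$-cycle relation into $\mu(c_2)+\mu(c_3)+\mu(c_4)=\mu(c_1)$, which holds precisely when the Big-Little-Big sum is $\pm1$ and $\mu(c_1)$ takes the matching sign, that is, precisely for valid $\mu$. I would then argue that the remaining interior or duplicated vertices are forced to a unique proper color, so that $f(\mu)\in S(C^*)$ is determined.

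Next I would define the inverse $g\colon S(C^*)\to M(C)$ by reading $\mu$ off a coloring $s$: set $\mu(c_i)=1$ or $-1$ according to whether the color difference along the boundary edge crossing $c_i$ is $1$ or $2\pmod 3$. Since $s$ is proper the difference is never $0$, so $\mu$ is well defined, and the same cycle/validity equivalence shows $\mu$ satisfies Maekawa and Big-Little-Big, hence $\mu\in M(C)$. That $f$ and $g$ are mutually inverse is then immediate, as each recovers the defining data of the other. Carrying this out for the Big-Little-Big and all-$90^\circ$ vertices is where the structural differences matter: there $C^*$ cannot be a plain $4$-cycle, since that forces an extra linear relation among the creases and undercounts (a $4$-cycle yields $6$ colorings, matching the bird's foot but not the value $4$ for a Big-Little-Big vertex nor $8$ for the all-equal vertex). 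I therefore expect to use the interior/duplicated vertices and undirected edges shown in the figure to realize exactly the constraints $\mu(c_1)=-\mu(c_2)$ and $\mu(c_3)=\mu(c_4)$ in the Big-Little-Big case, and to remove the spurious cycle relation entirely in the all-$90^\circ$ case.

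The main obstacle will be this last point: proving that the cyclomatic structure of each $C^*$ encodes neither more nor fewer linear constraints than the Maekawa and Big-Little-Big conditions impose, so that the correspondence is a genuine bijection rather than merely an injection or a surjection. Concretely, the delicate step is verifying surjectivity, that every proper $3$-coloring yields a genuinely valid MV assignment, since this is exactly where an incorrect choice of orientations or of which faces carry interior vertices would admit an invalid $\mu$ or exclude a valid one. Once the constraint-matching is established for each of the three angle types, comparing $|S(C^*)|$ with the known values $|M(C)|=6,4,8$ gives a final consistency check that the bijection accounts for every assignment.
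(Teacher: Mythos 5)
Your proposal is correct and follows essentially the same route as the paper: in each of the three cases one checks that the cycle/triangle relations of $C^*$, read through the directed-edge color-difference rule, encode exactly the Maekawa and Big-Little-Big constraints, and the matching counts $|M(C)|=|S(C^*)|=6,4,8$ close the argument. The only cosmetic difference is that you build both $f$ and its inverse explicitly, whereas the paper computes both cardinalities and then verifies surjectivity in one direction, which suffices for finite sets.
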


\begin{proof}
That Figure~\ref{fig:deg4}(a) is a SAW graph for the bird's foot vertex was proven in \cite{miura}, but we describe the proof here for completeness.  The SAW graph $C^*$ in this case is a 4-cycle, which has 18 ways to properly 3-color the vertices, and thus 6 ways to 3-color with one vertex pre-colored, i.e., $|S(C^*)|=6$.  Therefore to show that the function $f: M(C)\to S(C^*)$ given by the directed edges shown in Figure~\ref{fig:deg4}(a) is a bijection, we only need to establish surjectivity.  The Big-Little-Big Theorem, together with Maekawa's Theorem, tells us that in order for a bird's foot MV assignment $\mu$ to be valid, it needs have $\mu(c_3)$ (the ``heel" of the bird's foot) equal the majority assignment of the ``toes", which is $\mu(c_0)+\mu(c_1)+\mu(c_2)$.  If we pick an arbitrary $s\in S(C^*)$, then let $\mu(c_i)=s(v_i)-s(v_{i-1})$ for $i=0, 1, 2$, where the indices are taken mod 3, and let $\mu(c_3)=s(v_2)-s(v_3)$.  Then
\begin{eqnarray*}
\mu(c_0)+\mu(c_1)+\mu(c_2) & = & s(v_0) - s(v_3) + s(v_1) - s(v_0) + s(v_2) - s(v_1)\\ 
& = & s(v_2)-s(v_3) = \mu(c_3).
\end{eqnarray*}
Therefore we have found a valid MV assignment $\mu$ with $f(\mu)=s$, as desired.

For the BLB crease pattern $C$ and graph $C^*$ shown in Figure~\ref{fig:deg4}(b), we have that $|M(C)|=4$ because we must have $\mu(c_2)\not=\mu(c_3)$ by Big-Little-Big (giving us $(\mu(c_2),\mu(c_3))$ is either $(-1,1)$ or $(1,-1)$) and thus $\mu(c_0)$ and $\mu(c_1)$ must be the same in order to satisfy Maekawa's Theorem (giving us two choices for $\mu(c_0)=\mu(c_1)$).  We also have $|S(C^*)|=4$; if we pre-color $s(v_1)=0$, then the two triangles force $s(v_3)=0$, and then we have two coloring choices for the vertex set $\{v_2, v_4\}$ and two choices for $v_0$.  To show surjectivity of the map $f:M(C)\to S(C^*)$ given by the directed edges shown, we may assume $s(v_1)=s(v_3)=0$ for all $s\in S(C^*)$ (making $v_1$ our pre-colored vertex).  Then the color $s(v_2)$ will determine whether $(\mu(c_2), \mu(c_3))=(-1,1)$ or $(1,-1)$.  Then the color $s(v_0)$ will make either $\mu(c_0)=\mu(c_1)$ be $-1$ or $1$.  All options result in a valid MV assignment $\mu$.

For the all-equal-angles degree-4 case of Figure~\ref{fig:deg4}(c), we know that $|M(C)|=8$ and it is not hard to check that $|S(C^*)|=8$ as well.  Now let $s\in S(C^*)$ with $s(v_0)=0$.  We will show that the MV assignment $\mu$ given by the directed edges in Figure~\ref{fig:deg4}(c) will satisfy Maekawa's Theorem and thus be valid.  Suppose that $s(v_0)=s(v_2)$.  Then $\mu(c_1)\not= \mu(c_2)$.  Now, if $s(v_2)=1$ then $s(v_2)=2$ is forced by $s(v_0)=0$, and we have $\mu(c_0)=\mu(c_3)=1$.  Otherwise $s(v_2)=2$ and $s(v_3)=1$, which means $\mu(c_0)=\mu(c_3)=-1$.  Both cases satisfy Maekawa's Theorem.  On the other hand, if $s(v_0)\not= s(v_2)$ then we must have $s(v_4)=s(v_0)$ and $\mu(c_1)=\mu(c_2)$.  Then, if $s(v_5)=1$ we have $s(v_3)=2$ and $s(v_2)=0$, making $\mu(c_0)\not=\mu(c_3)$, whereas if $s(v_5)=2$ we'll also get $\mu(c_0)\not=\mu(c_3)$.  This covers all cases of colorings $s\in S(C^*)$, and we have that $C^*$ is a SAW graph for $C$.

\end{proof}

\begin{remark}\label{remark1}
Note that when it comes to counting MV assignments, we only care about the number of 3-colorings (with one vertex pre-colored) of the SAW graph.  The directed edges only describe how to biject a given MV assignment to a specific 3-coloring.  

In fact, the directed edges shown in Figure~\ref{fig:deg4} are not the only ones that can be used to perform the bijections for degree-4 flat-foldable vertices.  For the bird's foot, we may reverse all the directed edges and the bijection proof will still work.  For the BLB degree-4 vertex, we may reverse the directed edges along the 3-cycles (edges $(v_3,v_2)$ and $(v_2,v_1)$) \emph{or} we may reverse the other pair of directed edges ($(v_1,v_0)$ and $(v_3,v_0)$) and the proof will still work.  For the all-equal-angles degree-4 vertex, there are several variations of  directed edges that will work.  I.e., if we only switch the directed edges $(v_1,v_0)$ and $(v_3,v_0)$, or if we only switch the edges $(v_1,v_0)$ and $(v_2,v_3)$, then one may check that our bijection proof can still be made to work.  These alternate directed edge assignments are useful for tiling these SAW graphs in larger degree-4 crease patterns, as we will see in Section~\ref{sec:tiling}.
\end{remark}


It is more complicated to find SAW graphs for higher-degree flat-foldable vertices, mainly because of the many different ways the Big-Little-Big Theorem can be applied recursively when the vertex degree becomes large.  Nonetheless, we can design SAW graphs for flat-foldable vertices that are 3-nice.  To see this, we begin with a Lemma.


\begin{lemma}\textup{(Baby SAW graphs.)}\label{lem:babes}
Let $G_k$ be the creases $c_1,\ldots, c_{k+1}$ surrounding a local minimum of $k$ consecutive equal sector angles $\alpha_1,\ldots, \alpha_k$ in a single-verex, flat-foldable crease pattern.  Then for $k=1, 2, 3$ the graphs $G_k^*$ shown in Figure~\ref{fig:babes} are SAW graphs for the creases $G_k$, which we refer to as \textup{baby SAW graphs}.
\end{lemma}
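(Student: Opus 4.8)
The plan is to handle $k=1,2,3$ in parallel, each time carrying out three steps: (i) use the Big-Little-Big Theorem (Theorem~\ref{thm:BLB}) to pin down the exact constraint it places on $(\mu(c_1),\dots,\mu(c_{k+1}))$ and thereby count $|M(G_k)|$; (ii) compute $|S(G_k^*)|$ for the graph drawn in Figure~\ref{fig:babes}; and (iii) verify that the map $f$ induced by the boundary edges is a bijection. Because I expect $|M(G_k)|=|S(G_k^*)|$ in each case, step (iii) reduces to injectivity: after fixing the pre-colored vertex, reading the boundary edges in order recovers $\mu$ from any coloring, while conversely the colors are forced one crease at a time from $\mu$, so a cardinality count then finishes the argument.

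For step (i), Theorem~\ref{thm:BLB} applied to $k$ equal angles bordered by $k+1$ creases gives: $\mu(c_1)+\mu(c_2)=0$ when $k=1$ (the two creases disagree, so $|M(G_1)|=2$); $\mu(c_1)+\mu(c_2)+\mu(c_3)=\pm1$ when $k=2$ (the three creases are not all equal, so $|M(G_2)|=2^3-2=6$); and $\sum_{j=1}^{4}\mu(c_j)=0$ when $k=3$ (two mountains and two valleys, so $|M(G_3)|=\binom{4}{2}=6$). For step (ii) I anticipate the figure shows a triangle, a $4$-cycle, and a triangle with a length-three path joining two of its vertices, with one-vertex-pre-colored $3$-coloring counts $2$, $(2^4+2)/3=6$, and $6$ respectively; the last is obtained by fixing a triangle vertex (which forces the other two to the remaining two colors) and noting that a path of length three with distinctly colored endpoints has exactly $3$ proper colorings of its two interior vertices.

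The substance of the lemma is step (iii), and here the cases split by parity. If the boundary edges are oriented as a forward chain, then $\sum_j \mu(c_j)$ corresponds, via a telescoping sum of color differences, to $s(v_{\mathrm{last}})-s(v_{\mathrm{first}}) \pmod 3$. For $k=2$ the condition ``not all equal'' is exactly $s(v_0)\neq s(v_3)$, enforced by the one extra edge that closes the chain into the $4$-cycle, so this case is immediate. The main obstacle is the odd cases $k=1,3$: there the Big-Little-Big sum equals $0$, which telescopes to the \emph{equality} $s(v_{\mathrm{first}})=s(v_{\mathrm{last}})$, and an equality of colors cannot be imposed by the inequality constraints of a proper coloring. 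The way around this---which I expect the figure to encode---is to reverse the orientation of a single boundary edge; this converts ``$\sum_j\mu(c_j)\equiv 0$'' into a condition of the form $s(v_a)+s(v_b)+s(v_c)\equiv 0$ on three vertices, and since a boundary edge already forces two of them to be distinct (ruling out the all-equal solution), that condition becomes equivalent to $v_a,v_b,v_c$ being pairwise distinct---precisely a triangle. Thus $G_1^*$ is realized as a triangle and $G_3^*$ as a triangle together with a path that transmits the remaining creases, and the matching counts from step (ii) confirm that $f$ is a bijection.
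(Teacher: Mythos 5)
Your overall strategy---pin down the Big-Little-Big constraint on $(\mu(c_1),\dots,\mu(c_{k+1}))$, count both sides, and finish by injectivity plus equal cardinalities---is the same as the paper's, and your $k=2$ case (a $4$-cycle with a forward directed chain, where $s(w_0)\neq s(w_3)$ encodes ``the three creases are not all equal'') matches the paper's argument exactly, as do all of your counts $|M(G_k)|=2,6,6$ and $|S(G_k^*)|=2,6,6$. Where you diverge is in the odd cases, and the divergence stems from one false claim: that ``an equality of colors cannot be imposed by the inequality constraints of a proper coloring.'' It can: in a proper $3$-coloring, two triangles sharing an edge (i.e., $K_4$ minus an edge) force the two non-adjacent vertices to receive the same color, and this diamond is exactly the gadget Figure~\ref{fig:babes} uses (it is also how the degree-4 BLB graph of Figure~\ref{fig:deg4}(b) works). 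The paper's $G_1^*$ and $G_3^*$ keep all directed edges oriented forward along the chain $w_0\to\cdots\to w_{k+1}$ and use such a diamond to force $s(w_0)=s(w_{k+1})$, so the telescoped sum gives $\sum_j\mu(c_j)\equiv 0\pmod 3$ directly. Your workaround---reversing one directed edge so that the condition becomes $s(v_a)+s(v_b)+s(v_c)\equiv 0\pmod 3$ and realizing that as a triangle---does produce legitimate SAW graphs for $G_1$ and $G_3$, and the verification you sketch goes through; so as a proof that baby SAW graphs \emph{exist} your argument is essentially sound. But it is not a proof of the lemma about the graphs actually shown in the figure, and, more consequentially, your gadgets lose the property $s(w_0)=s(w_{k+1})$ on which the induction in Theorem~\ref{thm:3-nice} relies: there $G_1^*$ and $G_3^*$ are spliced into a larger SAW graph by splitting a vertex $v$ into $w_0$ and $w_{k+1}$ and recombining colorings via $s(v)=s(w_0)=s(w_{k+1})$, whereas in your triangles $w_0$ and $w_{k+1}$ are adjacent and hence necessarily differently colored, so that splice is impossible. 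In short: correct counts, correct $k=2$, valid alternative gadgets for $k=1,3$, but built on a false premise that both contradicts the figure and would derail the lemma's intended use downstream.
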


\begin{figure}
    \centering
    \includegraphics[width=\linewidth]{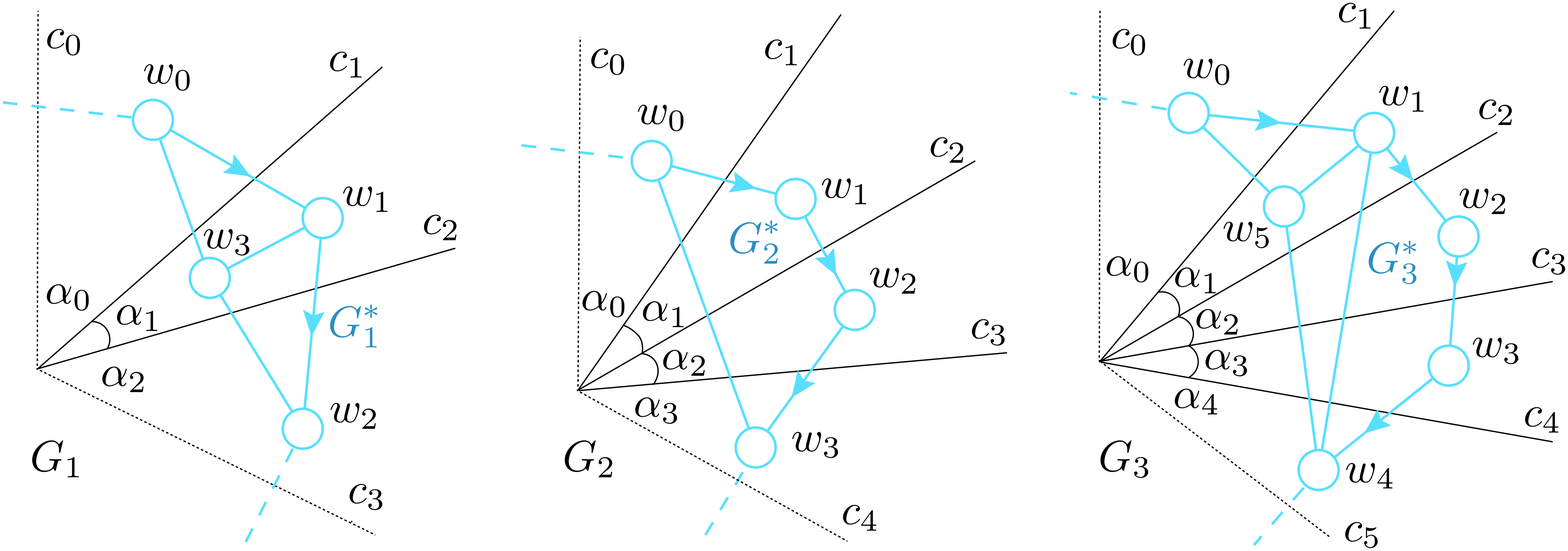}
    \caption{The baby SAW graphs for a local minimum of one, two, or three equal angles.}
    \label{fig:babes}
\end{figure}

\begin{proof} 
Using the notation shown in Figure~\ref{fig:babes}, we consider cases based on $k$.

{\bf Case 1: $k=1$.}  
The proof of this is the same as in the BLB degree-4 case:  We can note that if $s\in S(G_1^*)$ with $s(w_0)=0$ and $\mu$ is a MV assignment based on $s$ determined by the directed edges in $G_1^*$, then $s(w_2)=s(w_0)$, $s(w_1)=s(w_1)+\mu(c_1)$, and $s(w_2) = s(w_0) + \mu(c_1) + \mu(c_2)$, which implies that $\mu(c_1) \neq \mu(c_2)$. This gives us exactly the valid MV assignments of $G_1$ according to the Big-Little-Big Theorem, establishing the bijection between $M(G_1)$ and $S(G_1^*)$.


{\bf Case 2: $k=2$.}  
This proof is equivalent to the bird's foot degree-4 vertex:  Letting $s(w_0)=0$, we have $s(w_3)=s(w_0) +\mu(c_1)+\mu(c_2) +\mu(c_3)$. Since $s(w_0) \neq s(w_3)$, $\mu(c_1)+\mu(c_2)+\mu(c_3)=\pm 1$. This ensures the validity of MV assignments  $\mu$ on $G_2$ given by the directed edges and a proper 3-coloring of $G_2^*$ by the Big-Little-Big Theorem.


{\bf Case 3: $k=3$.} 

Here we have three equal angles in a row, and the Big-Little-Big Theorem says that any MV assignment $\mu$ will be valid among these creases if and only if $\sum_{i=1}^4 \mu(c_i)=0$, meaning $|M(G_3)|=\binom{4}{2}=6$.  The graph $G_3^*$ in Figure~\ref{fig:babes} has, if we pre-color $s(w_0)=0$, two choices for 3-coloring $w_1$, which then forces the colors of $w_5$ and $w_4$.  Then if $s(w_2)=s(w_4)$ we have 2 color choices for $w_3$, and if $s(w_2)\not=s(w_4)$ then there is only 1 choice for $w_3$.  Thus $S(G_3^*)=2(2+1)=6$.  Then, any coloring $s\in S(G_3^*)$ has $s(w_0)=s(w_4)$ and by the directed edges, $s(w_4)=s(w_0) +\mu(c_1)+\mu(c_2) +\mu(c_3)+\mu(c_4)$, which implies $\sum_{i=1}^4 \mu(c_i)=0$.  Thus the MV assignment generated by a coloring $s\in S(G_3^*)$ and the directed edges will be valid, and thus our map $f:M(G_3)\to S(G_3^*)$ is a bijection.


\end{proof}

Using Lemma~\ref{lem:babes}, we can construct SAW graphs for any flat-foldable vertex that is 3-nice.

\begin{theorem}\label{thm:3-nice}
For any 3-nice, single-vertex, flat-foldable crease pattern $C$, there exists a SAW graph $C^{*}$.
\end{theorem}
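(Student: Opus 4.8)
```latex
\textbf{Proof proposal.}

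The plan is to build the SAW graph $C^*$ for a 3-nice vertex $C$ by following the folding recursion of the Big-Little-Big Theorem and gluing together the baby SAW graphs from Lemma~\ref{lem:babes} one recursive step at a time. Recall that the recursion identifies a local minimum of $k$ consecutive equal sector angles (with $k \le 3$, since $C$ is 3-nice), folds the surrounding creases $c_i, \ldots, c_{i+k+1}$, and fuses the layers to produce a crease pattern $C_1$ with fewer creases; we iterate until all angles are equal. My construction would mirror this: at each recursive step, I attach the appropriate baby SAW graph $G_k^*$ (for the current value of $k$) to the partial SAW graph already built, identifying vertices so that the boundary edges of $G_k^*$ that cross the ``outer'' creases $c_i$ and $c_{i+k+1}$ are glued to vertices that will later carry the creases of the reduced pattern $C_1$. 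The key structural fact making this possible is that, after folding, the two outer creases $c_i$ and $c_{i+k+1}$ are fused into a single crease of $C_1$, and the baby SAW graph is designed precisely so that a proper 3-coloring of it forces the correct relationship between the colors across that fused crease.

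The main steps, in order, are as follows. First I would set up an induction on the number of creases (equivalently, on the number of recursive folding steps), with the base case being a vertex all of whose sector angles are equal; here any MV assignment satisfying Maekawa suffices, and one checks directly that a suitable cycle-type graph realizes the correct count $|S(C^*)| = |M(C)|$ (this is essentially the all-equal-angles computation, generalized from degree 4). Second, for the inductive step, I assume a SAW graph $C_1^*$ exists for the reduced pattern $C_1$ and splice in the baby SAW graph $G_k^*$ corresponding to the local-minimum angles that were folded to obtain $C_1$ from $C$. Third, I must verify the three SAW-graph conditions for the composite graph: every face of $C$ contains a vertex of $C^*$, every crease is crossed by a directed edge with the correct orientation, and, crucially, the coloring forced by the boundary/directed edges extends to a \emph{unique} proper 3-coloring of all interior vertices. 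Finally, I would count: I need $|M(C)| = |S(C^*)|$, which should follow because the baby SAW graph contributes exactly the factor by which $|M(\cdot)|$ grows when one ``unfolds'' the Big-Little-Big step, and Lemma~\ref{lem:babes} guarantees that $G_k^*$ realizes exactly the valid MV assignments of the creases surrounding those $k$ equal angles.

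The hard part will be the gluing and the uniqueness of the forced coloring. When I splice $G_k^*$ into $C_1^*$, I identify the vertices straddling the fused crease of $C_1$ with two boundary vertices of $G_k^*$; I must check that the color \emph{difference} dictated across the fused crease in $C_1^*$ is compatible with the difference that $G_k^*$ imposes across the reconstituted creases $c_i$ and $c_{i+k+1}$ of $C$. Concretely, folding fuses the two outer creases and the baby SAW graph is built so that traversing it reproduces the net color shift $\mu(c_i) + \cdots + \mu(c_{i+k+1})$ demanded by Big-Little-Big; I would verify that this net shift equals the single-crease shift recorded in $C_1^*$, so that no coloring is lost or double-counted. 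The subtlety is that the embedding must keep all directed (boundary) edges on the outer face and must not create color conflicts at the glued vertices, and I expect this to require the flexibility in orienting directed edges noted in Remark~\ref{remark1}. Because the vertex is 3-nice, only the three baby SAW graphs $G_1^*, G_2^*, G_3^*$ ever arise, so the splicing is a finite case analysis at each step, and the induction closes once I confirm that each splice multiplies the coloring count by exactly the Big-Little-Big factor.
```
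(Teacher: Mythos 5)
Your overall strategy---induction along the Big-Little-Big folding recursion, splicing in the baby SAW graphs $G_k^*$ of Lemma~\ref{lem:babes} at each step---is the same as the paper's. But your base case is a genuine gap. You propose to terminate the induction at a vertex all of whose sector angles are equal and to ``check directly that a suitable cycle-type graph realizes the correct count,'' generalizing the degree-4 all-equal-angles computation. For degree $2n>4$ an all-equal-angles vertex has $2\binom{2n}{n-1}$ valid MV assignments (every Maekawa-satisfying assignment works there), and no SAW graph realizing this count with a working bijection is known; the paper explicitly lists this as an open problem in its conclusion. The paper sidesteps the issue by running the recursion all the way down to a degenerate degree-2 vertex, whose SAW graph is a single directed edge, and taking that as the base case. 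As stated, your induction closes only in the situations already covered by Theorem~\ref{thm:deg4}.

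A second, smaller inaccuracy: you describe every splice as gluing $G_k^*$ across ``the single crease of $C_1$'' into which $c_i$ and $c_{i+k+1}$ are fused. That picture is correct only for $k$ even ($k=2$), where one crease survives the fold carrying the majority MV value, and the paper adds the path $G_2^*$ in parallel with the existing directed edge crossing that crease. For $k$ odd ($k=1,3$) the folded creases vanish entirely and the surrounding sector angles merge into one; there is no surviving crease, and the paper instead splits a vertex of $C_1^*$ lying in the merged sector into two and inserts $G_k^*$ between the halves (see Figure~\ref{fig:baseCaseEx}). These are two different graph operations with different verifications (a product formula $|S(C_0^*)|=|S(C_1^*)|\cdot|S(G_k^*)|$ in the odd case versus a majority-vote compatibility check in the even case), so the ``finite case analysis'' you defer is where most of the actual work lives. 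Your appeal to Remark~\ref{remark1} for orientation flexibility is reasonable but is not actually needed in the single-vertex construction; it matters later, when tiling.
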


\begin{proof}
Let $C = C_0$ be an arbitrary 3-nice vertex of degree $2n$.  We proceed by induction on $n$.


For the base case, a flat-foldable vertex of degree 2 is just two creases that make sector angles of $180^\circ$ with each other.  (Such a vertex makes a straight line crease and can be thought of as degenerate, but it still satisfies Kawasaki's and Maekawa's Theorems.)  Its SAW graph is just two vertices $\{v_0, v_1\}$ connected by an edge, as in Figure~\ref{fig:baseCaseEx} left.  If $v_0$ is pre-colored, then the directed edge $(v_0,v_1)$ will act to form the bijection with the two MV assignments of the vertex.

Figure~\ref{fig:baseCaseEx} also illustrates  the basic idea of how the induction proceeds:  A local minimum of consecutive equal sector angles of $C_0$ are located and folded to make a smaller crease pattern $C_1$, which by induction has a SAW graph $C_1^*$.  We then use graph operations to modify $C_1^*$ into a SAW graph for $C_0$.  In Figure~\ref{fig:baseCaseEx} we see how the BLB degree-4  vertex SAW graph is a modification of the base case SAW graph, where a vertex and edge have been split and the baby SAW graph $G_1^*$ inserted.  Similarly, inserting the baby SAW graph $G_3^*$ gives a degree-6 vertex.  Also in Figure~\ref{fig:baseCaseEx} we see how the bird's foot SAW graph is a different modification of the base case.

\begin{figure}
    \centering
    \includegraphics[width=\linewidth]{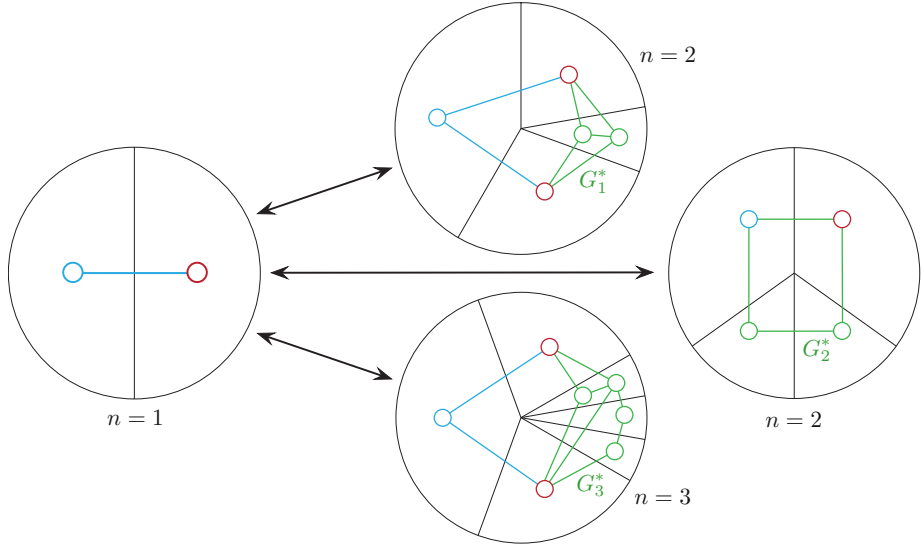}
    \caption{Example of the base case where $n = 1$, with recursive definitions of the next SAW graphs where $k = 1, 2$, and 3.}
    \label{fig:baseCaseEx}
\end{figure}

To make this more formal, let $c_i,\ldots, c_{i+k}$ be a sequence of creases in $C_0$ that have a local minimum of consecutive equal sector angles $\alpha_i=\ldots=\alpha_{i+k-1}$ between them (i.e., $\alpha_{i-1}>\alpha_i$ and $\alpha_{i+k}>\alpha_i$).  Since $C_0$ is 3-nice, we have that $k=1$, 2, or 3.  

For the cases where $k=1$ and $k=3$, when we fold the creases $c_i,\ldots, c_{i+k}$ we get a new crease pattern $C_1$ with the angles $\alpha_{i-1}, \ldots, \alpha_{i+k}$ replaced by a single angle $\alpha_i'=\alpha_{i-1}-\alpha_i+\alpha_{i+k}$.  By induction, we know a SAW graph  $C_1^*$ exists for $C_1$, and by the properties of SAW graphs, we know that we can embed $C_1^*$ onto the crease pattern of $C_1$ so that at least one vertex $v$ of $C_1^*$ is in the sector angle $\alpha_i'$.  To make the SAW graph $C_0^*$ we split the vertex $v$ into two vertices $v_1$ and $v_2$ so that every edge adjacent to $v$ is also adjacent to $v_1$ and $v_2$.  Then, if $k=1$ we insert the baby SAW graph $G_1^*$, identifying $w_0=v_1$ and $w_2=v_2$.  If $k=3$ then we insert $G_3^*$, identifying $w_0=v_1$ and $w_4=v_2$.  

That is, $V(C_0^*)=(V(C_1^*) \setminus\{v\})\cup V(G_k^*)$ and $E(C_0^*)=(E(C_1^*)\setminus {\rm adj}(v))\cup E(G_k^*) \cup N(v,w_0)\cup N(v,w_{2+k})$ where adj$(v)$ is the set of edges adjacent to $v$ and $N(v,x)$ is the set of edges with one endpoint in adj$(v)$ and the other endpoint $x$.

Then given any proper 3-coloring $s\in S(C_0^*)$, we have $s(w_0)=s(w_{k+1})$.  Letting $s(v)=s(w_0)$ induces a coloring $s_1\in S(C_1^*)$, and since $C_1^*$ is a SAW graph, we have a bijection $f_1:S(C_1^*)\to M(C_1)$.  The coloring $s$ also induces a proper 3-coloring $s_2\in S(G_k^*)$, where we assume the vertex $w_0$ is pre-colored, and we have a bijection $f_2:S(G_k^*)\to M(G_k)$.  Since $G_k$ shares no creases with $C_1$ and valid MV assignments of $G_k$ operate independently of those of $C_1$ (as implied by the Big-Little-Big Theorem), we have that $f_1$ and $f_2$ form a bijection between $S(C_0^*)$ and $M(C_0)$, where $|S(C_0^*)|=|S(C_1^*)|\cdot |S(G_k^*)|$ and $|M(C_0)|=|M(C_1)|\cdot |M(G_k)|$.

When $k=2$ we have creases $c_i, c_{i+1}$, and $c_{i+2}$ with $\alpha_i=\alpha_{i+1}$ between them and $\alpha_{i-1}>\alpha_i$ and $\alpha_{i+2}>\alpha_i$.  Folding these gives a single vertex crease pattern $C_1$ with creases $c_{i+1}, c_{i+2}$ and angles $\alpha_i, \alpha_{i+1}$ removed, i.e., $C_1$ will have the sequence of creases $c_{i-1}, c_i$, and $c_{i+3}$ with angles $\alpha_{i-1}$ and $\alpha_{i+2}$ between them.  Then the SAW graph $C_1^*$ exists, and by SAW graph properties there will exist an edge $\{v_1,v_2\}\in E(C_1^*)$ crossing the crease $c_i$, and let us assume that for the bijection $f_1:S(C_1^*)\to M(C_1)$ we have the directed edge $(v_1,v_2)$.  We relabel the vertices $v_1, v_2$ with $w_0, w_3$, respectively, and define the graph $C_0^*$ by $V(C_0^*)=V(C_1^*)\cup\{w_1, w_2\}$ and $E(C_0^*)=E(C_1^*)\cup \{w_0,w_1\}\cup \{w_1,w_2\}\cup\{w_2,w_3\}$, which is simply adding the baby SAW graph $G_2^*$ onto the edge $\{w_0,w_3\}$ of $C_1^*$.  

Now let $s\in S(C_0^*)$.  Since $C_1^*$ and $G_2^*$ are subgraphs of $C_0^*$, we may consider $s\in S(C_1^*)$ and $s\in S(G_2^*)$ as well (where we think of $w_0$ in $G_2^*$ to be pre-colored).  Let $f_2:S(G_2^*)\to M(G_2)$ be the bijection for the SAW graph $G_2^*$ and the creases $c_i, c_{i+1}, c_{i+2}$ that form the copy of $G_2$ in $C_0$, and let $\mu_1=f_1(s)$ and $\mu_2=f_2(s)$.  Then $\mu_1(c_i)=s(w_3)-s(w_0) = s(w_3)-s(w_2)+s(w_2)-s(w_1)+s(w_1)-s(w_0) = \mu_2(c_{i+2})+\mu_2(c_{i+1})+\mu_2(c_i)$ (see the directed edges of $G_2^*$ in Figure~\ref{fig:babes} for reference).  This means that in $C_1$, $\mu_1(c_i)$ equals the majority MV assignment under $\mu_2$ among the creases $c_i, c_{i+1}, c_{i+2}$ in $C_0$.  Therefore the MV assignment $\mu$ for $C_0$ defined by
$$\mu(c)=\left\{
\begin{array}{cl}
\mu_1(c) & \mbox{if }c\in E(C_1)\setminus\{c_i\}\\
\mu_2(c) & \mbox{if }c\in \{c_i, c_{i+1}, c_{i+2}\}
\end{array}\right.$$
is valid (since $\mu_1$ is valid on $C_1$, $\mu_2$ is valid on $\{c_i,c_{i+1},c_{i+2}\}$, and $\mu_1(c_i) = \mu_2(c_{i+2})+\mu_2(c_{i+1})+\mu_2(c_i)$ means that Maekawa's Theorem will hold for $\mu$).  Since $\mu_1$ and $\mu_2$ are bijections, we conclude that this mapping from 3-colorings $s\in S(C_0)$ to MV assignments $\mu\in M(C_0)$ is also a bijection.

\end{proof}



Theorem~\ref{thm:deg4} covers only a portion of all single-vertex crease patterns, those that are 3-nice.  In order to prove this for all flat-foldable vertices, baby gadgets $G_k^*$ are needed for an arbitrary number of consecutive equal angles as well as SAW graphs for the all-equal-angles cases of degree six and higher.  Nonetheless, we now have SAW graphs for a wide variety of flat-foldable vertices, including all degree-4 vertices (since we already have a SAW graph for the all-equal-angles degree-4 case).

\begin{figure}
    \centering
    \includegraphics[width=\linewidth]{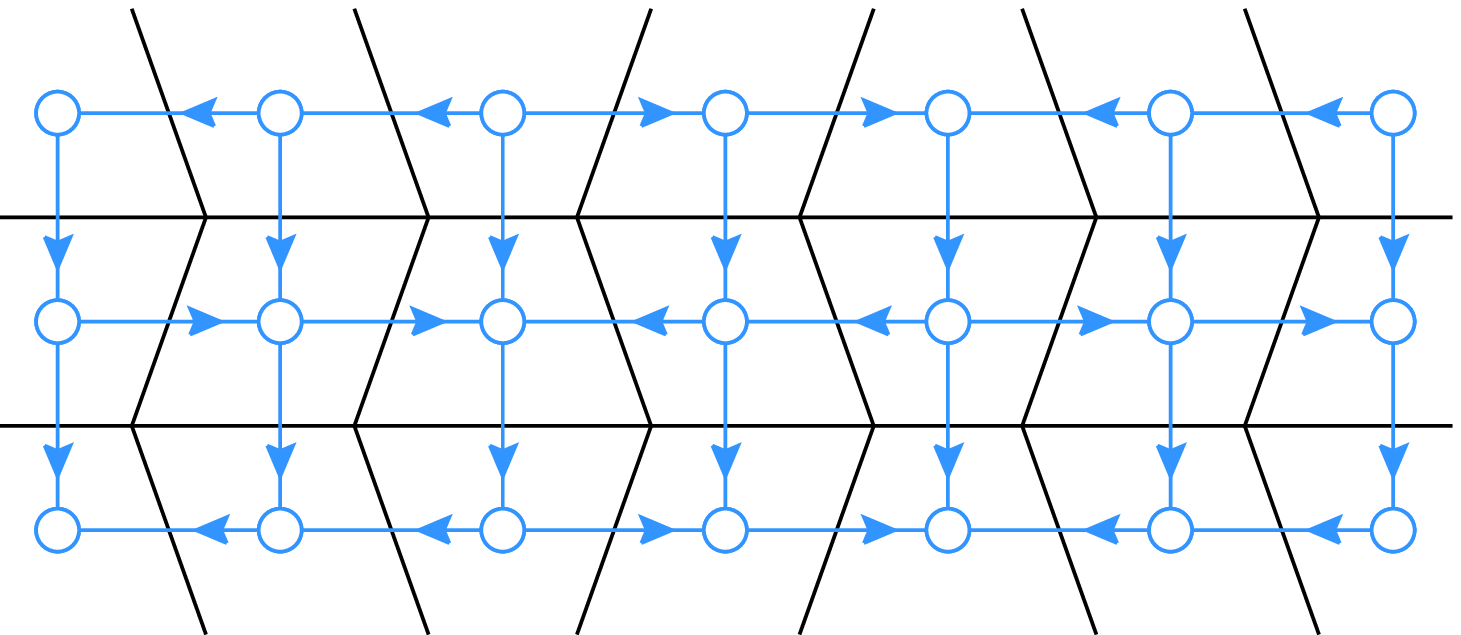}
    \caption{Modified Miura-ori crease patterns have the same (ignoring edge orientations) SAW graph as the standard Miura-ori.}
    \label{fig:modMiura}
\end{figure}

\section{Tiling SAW Graphs}\label{sec:tiling}

Now that we have seen SAW graphs for a variety of single-vertex crease patterns, we turn to the problem of putting them together to construct SAW graphs of multiple-vertex crease patterns.  
In a general, locally flat-foldable crease pattern  $(C,P)$, a MV assignment will be locally valid if each vertex is valid. However, we have that every crease $c$ that does not terminate on the boundary of $P$ is shared by two vertices which forces $c$ to fold in a way that results in both vertices being valid. Since the directed edges of a single-vertex SAW graph are boundary edges (in the sense described in Section~\ref{sec:backgrounnd}), we may try to maintain MV consistency between adjacent vertices in the crease pattern by tiling single-vertex SAW graphs, i.e., identify two boundary edges of two single-vertex SAW graphs that cross the same crease line $c$ and are oriented in the same direction.  This is not always possible to do when tiling many different SAW graphs, but in some cases this simple strategy works very well.


For example, a \emph{modified Miura-ori} crease pattern is shown in Figure~\ref{fig:modMiura}, which is the same as a Miura-ori crease pattern (Figure~\ref{fig:Miura}(a)) with some of the vertical columns of zig-zag creases reflected from left-to-right. Notice that even with some of the columns of crease pattern vertices (and thus their bird's feet SAW graphs) reflected from left-to-right, the directed edges between neighboring SAW graphs will match. This implies that, ignoring the edge orientations, the SAW graph for an $m\times n$ modified Miura-ori is the same $m\times n$ grid graph as for a $m\times n$ Miura-ori. We arrive at the following rather surprising result:

\begin{theorem}
The number of locally-valid MV assignments for any $m\times n$ modified Miura-ori is equal to those of an $m\times n$ standard Miura-ori.
\end{theorem}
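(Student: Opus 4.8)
The plan is to exhibit, for any $m\times n$ modified Miura-ori $C_{\mathrm{mod}}$, a SAW graph $C^*_{\mathrm{mod}}$ whose \emph{underlying undirected} graph is exactly the same $m\times n$ grid graph that serves (by \cite{miura}) as the SAW graph of the standard Miura-ori $C_{\mathrm{std}}$. The payoff is a short counting chain: the defining property of a SAW graph gives a bijection $f:M(C)\to S(C^*)$, so $|M(C)|=|S(C^*)|$, and crucially the quantity $|S(C^*)|$ is the number of proper $3$-colorings with one vertex pre-colored, which by definition depends only on the undirected edge set of $C^*$ and not on any edge orientations. Hence I would conclude
$$|M(C_{\mathrm{mod}})|=|S(C^*_{\mathrm{mod}})|=|S(\text{grid})|=|S(C^*_{\mathrm{std}})|=|M(C_{\mathrm{std}})|,$$
which is exactly the statement. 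The whole argument therefore reduces to producing $C^*_{\mathrm{mod}}$ and identifying its undirected structure.

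First I would observe that reflecting a column of vertices left-to-right sends each bird's foot to its mirror image, which is again a bird's foot (two congruent acute and two congruent obtuse sector angles). Thus every interior vertex of $C_{\mathrm{mod}}$ remains a degree-$4$, $3$-nice, flat-foldable vertex, and by Theorem~\ref{thm:deg4} each has a $4$-cycle as its single-vertex SAW graph with the boundary edges of Figure~\ref{fig:deg4}(a). I would then assemble these single-vertex SAW graphs by the tiling procedure of Section~\ref{sec:tiling}: for each interior crease $c$ shared by two neighboring vertices, identify the two boundary edges (one from each $4$-cycle) that cross $c$. Since reflecting columns does not change which parallelograms are adjacent—$C_{\mathrm{mod}}$ is still an $m\times n$ array of parallelograms with the same shared-crease incidences—the resulting undirected graph $C^*_{\mathrm{mod}}$ is the $m\times n$ grid graph, identical to that of $C_{\mathrm{std}}$.

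The main obstacle, and the step demanding real care, is verifying that this tiling is \emph{consistent}, i.e.\ that across every shared crease the two identified boundary edges can be made to point the same way, so that together they give one directed edge obeying the MV rule at both incident vertices at once. A naive identification fails precisely at the interface with a reflected column, because the reflection flips the orientations of that column's boundary edges relative to their unreflected neighbors. Here I would invoke Remark~\ref{remark1}: reversing \emph{all} directed edges of a bird's foot SAW graph still yields a valid bijection. I would therefore reverse the boundary-edge orientations of every bird's foot in each reflected column, which exactly cancels the flip induced by the reflection and realigns the directed edges with the adjacent columns. The crux is to check that this one-orientation-per-column scheme is globally coherent, so that every shared-crease identification agrees simultaneously; I expect this to reduce to a short case analysis tracing the zig-zag paths of Figure~\ref{fig:modMiura}. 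Once consistency is confirmed, $C^*_{\mathrm{mod}}$ is a genuine SAW graph with the grid graph as its undirected skeleton, and the counting chain above completes the proof.
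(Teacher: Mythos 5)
Your proposal takes essentially the same route as the paper: the paper likewise builds the SAW graph of the modified Miura-ori by tiling the (possibly reflected) bird's-foot $4$-cycles, observes that the resulting undirected graph is the same $m\times n$ grid graph as for the standard Miura-ori, and concludes via the fact that $|S(C^*)|$ depends only on the undirected edge set. The only difference is cosmetic: the paper asserts from Figure~\ref{fig:modMiura} that the directed edges across reflected columns already match, whereas you invoke Remark~\ref{remark1} to reverse the orientations within reflected columns — either way the undirected skeleton is the grid graph and the counting chain closes the argument.
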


Similarly, a \emph{snake tessellation} crease pattern appears similar to the modified Miura-Ori but with degree-6 vertices (called waterbomb vertices) as well; see Figure~\ref{fig:snake}(b). However, if we imagine splitting each of the degree-6 vertices into two bird's feet, as shown in Figure~\ref{fig:snake}(a), we see that the number of valid MV assignments for a waterbomb vertex is the same as for two bird's feet that share a heel.  That is, by the Big-Little-Big Theorem, in order for a waterbomb vertex to fold flat, the creases $\{w_1, w_2, w_3\}$ in Figure~\ref{fig:snake}(a) cannot all have the same MV parity--say, exactly one is a valley--and then exactly one of $\{w_4, w_5, w_6\}$ must be a valley as well (to satisfy Maekawa's Theorem).  If we insert a bird's heel $c$ in between these creases, we have that these two bird's feet will fold flat under these same exact conditions for the creases $\{w_1, w_2, w_3\}$ and $\{w_4, w_5, w_6\}$.  Therefore, the number of locally-valid MV assignments for the snake tessellation is the same as for the corresponding modified Miura-ori, such as that shown in Figure~\ref{fig:snake}(b).  We have thus proved the following:

\begin{figure}
    \centering
    \includegraphics[width=\linewidth]{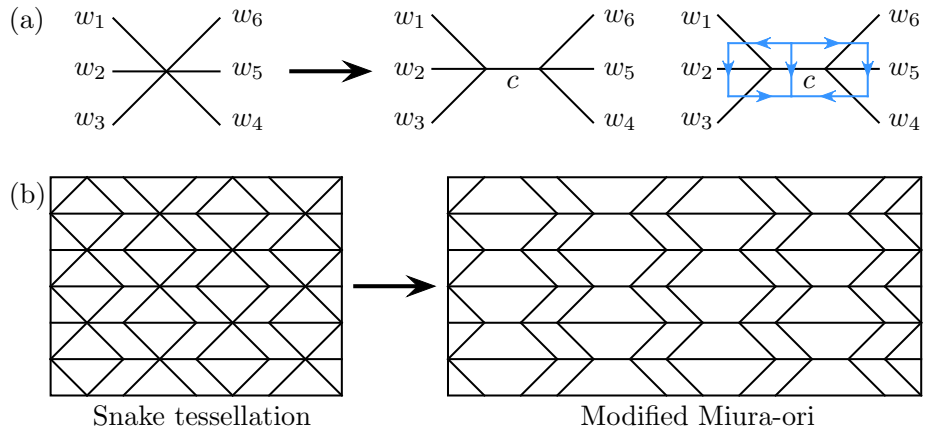}
    \caption{(a) Splitting a waterbomb vertex into two bird's feet.  (b) The snake tessellation turns into a modified Miura-ori under this transformation.}
    \label{fig:snake}
\end{figure}

\begin{theorem}
The number of locally-valid MV assignments for any $m\times n$ snake tessellation is equal to those of an $m\times n$ standard Miura-ori or $m\times n$ modified Miura-ori.
\end{theorem}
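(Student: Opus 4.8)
The plan is to build an explicit bijection between the locally-valid MV assignments of the $m\times n$ snake tessellation $S$ and those of the $m\times n$ modified Miura-ori $M$ obtained by splitting every degree-6 waterbomb vertex into two bird's-foot vertices joined by a single inserted heel crease, and then to invoke the preceding theorem, which equates $|M(M)|$ with the count for the standard Miura-ori. This reduces everything to a purely local statement at one waterbomb vertex, after which the global assembly is a routine gluing.

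The heart of the argument is the local correspondence at a waterbomb vertex $v$ with creases $w_1,\dots,w_6$, split into the toe-sets $\{w_1,w_2,w_3\}$ and $\{w_4,w_5,w_6\}$ of two bird's feet sharing an inserted heel $c$. First I would record, from Theorem~\ref{thm:BLB} together with Maekawa's Theorem, that $v$ is locally valid exactly when each fan is a $2$-$1$ mountain-valley split with both fans having the same majority sign: Big-Little-Big forbids either fan from being all-mountain or all-valley, forcing each fan-sum to be $\pm1$, and then $\sum_{i=1}^{6}\mu(w_i)=\pm2$ forces the two fan-sums to agree. On the other side, by the bird's-foot characterization established in the proof of Theorem~\ref{thm:deg4}, each foot is valid precisely when its heel equals the majority assignment of its three toes; since both feet share the one crease $c$, the pair is valid exactly when both fans are $2$-$1$ splits with a common majority, and that common value is forced onto $\mu(c)$. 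Hence forgetting $\mu(c)$ is a bijection between the valid assignments of the two-feet gadget and those of the waterbomb, and both admit exactly $18$ valid assignments of $w_1,\dots,w_6$.

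With the local step in hand I would globalize it: define $\Phi\colon M(S)\to M(M)$ by leaving every original crease untouched and setting each inserted heel $c_v$ to the common majority value forced at its waterbomb, and define $\Psi\colon M(M)\to M(S)$ by restriction to the original creases. The only vertices whose incident crease-sets differ between $S$ and $M$ are the waterbombs and their split descendants, and each inserted heel $c_v$ is an internal edge adjacent \emph{only} to the two bird's feet it creates. Consequently local validity at every other vertex is literally identical in $S$ and in $M$, while at each split site validity transfers through the local correspondence above. Checking that $\Phi$ and $\Psi$ are mutually inverse is then immediate, giving $|M(S)|=|M(M)|$, and the preceding theorem on modified Miura-ori completes the chain $|M(S)|=|M(M)|=|M(\text{standard Miura-ori})|$.

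I expect the main obstacle to be the local step, and within it the bookkeeping of orientation conventions: one must confirm that the two bird's feet impose the \emph{same} value of $\mu(c)$ rather than opposite values, so that their shared-majority condition genuinely coincides with the same-sign condition extracted at the waterbomb (an orientation flip would instead match opposite-sign fans and break the count). Verifying this, together with confirming that the splitting produces a legitimate modified Miura-ori of the same grid dimensions and that the inserted heels never couple to neighboring vertices, is where care is needed; the remainder is a direct gluing of local counts.
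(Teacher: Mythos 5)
Your proposal is correct and follows essentially the same route as the paper: both arguments split each degree-6 waterbomb vertex into two bird's feet sharing an inserted heel crease, use the Big-Little-Big Theorem plus Maekawa's Theorem to show that local validity at the waterbomb is equivalent to validity of the two feet (with the heel's assignment forced to the common majority), and then conclude that the snake tessellation's count equals that of the corresponding modified Miura-ori. Your write-up is somewhat more explicit than the paper's about the global maps $\Phi$ and $\Psi$ and the orientation bookkeeping, but the underlying idea is identical.
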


These examples show us how sometimes it is easy to tile single-vertex SAW graphs to make SAW graphs for more general crease patterns. However, the condition that boundary edges crossing the same crease have to be oriented in the same direction is not always achievable. 

\begin{figure}
    \centering
    \includegraphics[width=\linewidth]{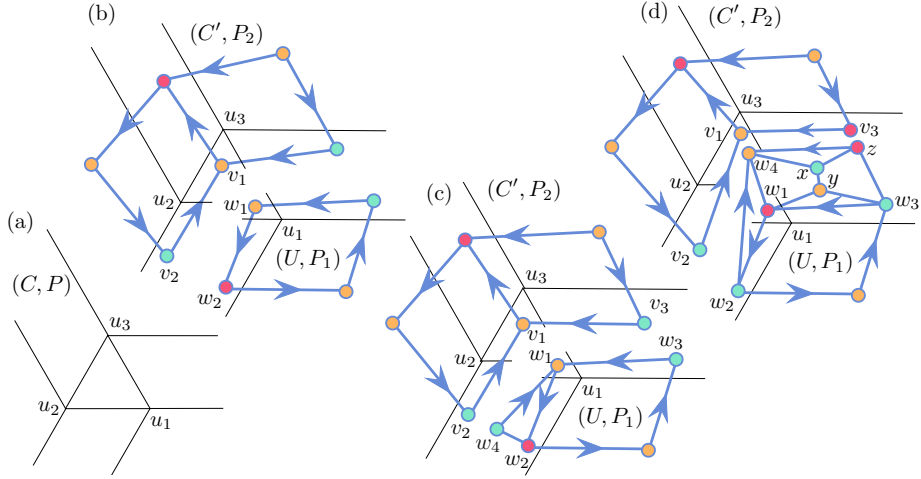}
    \caption{(a) A triangle twist.  (b) The twist split into two crease patterns, each with a SAW graph that cannot be merged together.  (c) A triangle added to one SAW graph to allow merging.  (d) A triangle and a triangular prism added to one SAW graph to allow merging.}\label{fig:tritwist}
    \label{fig:twist}
\end{figure}

For example, consider the triangle twist crease pattern $(C,P)$ in Figure~\ref{fig:tritwist}(a), in which each vertex is a bird's foot. To find its SAW graph, we first find the SAW graph for two of the  crease pattern vertices. Let the three vertices of $C$ be $u_1, u_2, u_3$. We divide the region $P$ into two disjoint regions $P_1$ and $P_2$, where $u_1 \in P_1, u_2, u_3 \in P_2$, and $P = P_1 \cup P_2$. Now let $(U,P_1)$ be the single-vertex crease pattern corresponding to $u_1$, and $(C',P_2)$ be the crease pattern corresponding to the two vertices $u_2$ and $u_3$, where $C' = C \setminus{\{u\}}$. We can find a SAW graph for crease pattern $(C', P_2)$ by simply connecting the SAW graphs for the two vertices along one boundary edge; see Figure~\ref{fig:tritwist}(b). However, in order to add in the third vertex to get the SAW graph for $(C, P)$, we have to match the SAW graphs $C'^{*}$ and $U^{*}$ along two boundary edges. This is because crease patterns $(C',P_2)$ and $(U, P_1)$ share two creases, so the two boundary edges along those two creases must also have the same orientation. 

Notice that we cannot just merge the vertices because the two edges $(v_2, v_1) \in E(C'^*)$ and $(w_1,w_2)\in E(U^*)$ cross the same crease but do not have the same orientation. In order to switch its orientation, we have to modify our SAW graph $U^{*}$. To do this, we add a triangle on the edge $(w_1, w_2)$. That is, we add a new vertex $w_4$, and add edges $\{w_4, w_1\}, \{w_4, w_2\}$. Since this forms a $3$-cycle, the color for $w_4$ is forced. Now we can choose one of the two new edges to be directed in the direction opposite to $(w_1,w_2)$, so as to match with $(v_2,v_1)$. However, this adds an extra undirected edge to the boundary of $U^*$. If we choose to have the directed edge $(w_4, w_1)$, then we can merge the two SAW graphs by setting $v_2 = w_4, v_1 = w_1, v_3 = w_3$; see Figure~\ref{fig:tritwist}(c). 

If we choose to have the directed edge $(w_4, w_2)$, the undirected edge will be between the two directed edges of $U^*$ that we want to merge with $C'^*$. To deal with this scenario, we add a triangular prism graph (with new vertices $x, y$, and $z$) to $U^*$ along $\{w_1,w_3\}$, as seen in Figure~\ref{fig:tritwist}(d). This swaps the positioning of a directed and undirected edge, allowing us to merge the SAW graphs by identifying $v_2=w_2$, $v_1=w_4$, and $v_3=z$. Note that when we add these new graphs, the directed edges $(w_1, w_2)$ and $(w_3, w_1)$ are no longer boundary edges! However, the merged SAW graphs will create a SAW graph for the original crease pattern, as we will now prove in the following Lemmas.

\begin{lemma}\label{lem:sawtriangle}
Let $C$ be a crease pattern with a SAW graph $C^{*}$. Let edge $e = (u, v) \in E(C^{*})$ be a boundary edge crossing crease $c$.  Create a new graph $C^{**}$ 
by $V(C^{**})=V(C^*)\cup\{w\}$ and $E(C^{**})=E(C^*)\cup\{ (w,u), \{v,w\} \}$.
(see Figure~\ref{fig:sawadd}(b)). Then  $C^{**}$ is also a SAW graph for crease pattern $C$.
\end{lemma}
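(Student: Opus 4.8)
The plan is to exhibit a bijection $M(C)\to S(C^{**})$ compatible with the directed-edge conditions by reducing to the bijection $f:M(C)\to S(C^*)$ guaranteed by the hypothesis that $C^*$ is a SAW graph for $C$. Geometrically, I would embed $C^{**}$ exactly as $C^*$ was embedded, placing the new vertex $w$ in the same face of $(C,P)$ as $v$, so that the new directed edge $(w,u)$ crosses the same crease $c$ as $(u,v)$ (but in the opposite sense across $c$), while the new undirected edge $\{v,w\}$ lies inside a single face and crosses no crease. With this embedding, all three SAW-graph conditions are immediate at every crease other than $c$, since no vertex, edge, or face of $C^*$ has been disturbed; it remains only to handle the crease $c$ and the bijection count.

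The first key step is the counting identity $|S(C^{**})|=|S(C^*)|$. In $C^{**}$ the vertex $w$ is adjacent to both $u$ and $v$, and since $(u,v)\in E(C^*)$ the colors $s(u)$ and $s(v)$ are distinct in any proper $3$-coloring; hence $w$ is forced to take the unique remaining residue of $\mathbb{Z}_3$. Thus the restriction $s\mapsto s|_{V(C^*)}$ is a bijection $S(C^{**})\to S(C^*)$ (keeping the same pre-colored vertex $v_0\in V(C^*)$), with inverse given by this forced extension.

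The second, and main, step is to verify that the two directed edges crossing $c$, namely $(u,v)$ and $(w,u)$, always assign the \emph{same} value to $\mu(c)$, so that the MV assignment read off from the directed edges of $C^{**}$ is well defined and agrees with $f^{-1}(s|_{V(C^*)})$. This is a short computation in $\mathbb{Z}_3$ using that $s(u),s(v),s(w)$ are three distinct residues: if $s(v)-s(u)\equiv 1\pmod 3$ then $w$ is forced to the third color and $s(u)-s(w)\equiv 1\pmod 3$ as well, while if $s(v)-s(u)\equiv 2\pmod 3$ then likewise $s(u)-s(w)\equiv 2\pmod 3$. In both cases the directed-edge rule yields identical $\mu(c)$. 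This is the one place the argument could break—if the forced color of $w$ made the two rules disagree—so I would present the case check explicitly, though it is routine modular arithmetic rather than a genuine obstacle.

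Finally, I would assemble the pieces. The composite $M(C)\xrightarrow{f}S(C^*)\xrightarrow{\text{extend}}S(C^{**})$ is a bijection, being a composition of bijections, and by the consistency check of the previous paragraph it is precisely the map induced by the directed edges of $C^{**}$. Consequently the remaining SAW-graph condition at the crease $c$ holds, and coloring the boundary (directed) edges of $C^{**}$ forces a unique proper $3$-coloring, which establishes that $C^{**}$ is a SAW graph for $C$.
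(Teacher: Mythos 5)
Your proposal is correct and follows essentially the same route as the paper: both arguments observe that the triangle forces the color of $w$ (so restriction gives a bijection $S(C^{**})\to S(C^*)$) and then verify that the directed edges $(u,v)$ and $(w,u)$ assign the same value to $\mu(c)$. The only cosmetic difference is that the paper does the consistency check via the identity $s(u)+s(v)+s(w)\equiv 0\pmod 3$ in a single algebraic computation, whereas you do a two-case check in $\mathbb{Z}_3$; the content is identical.
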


\begin{proof} 
We are given that $C^*$ is a SAW graph of $C$, so there exists a bijection $f:M(C)\to S(C^*)$.  Note that since $\{u,v,w\}$ is a triangle in $C^{**}$, any 3-coloring $s\in S(C^*)$ will, if applied to $C^{**}$, force the color of $w$, which we will denote by $s(w)$ and consider $s$ to be a 3-coloring of $C^{**}$ as well.   Now, the triangle $\{u,v,w\}$ gives us
\begin{align}\label{eq:triaddSAW}
s(u) + s(v) + s(w) \equiv 0 \pmod{3}.
\end{align}
We next examine how the directed edges $(u,v)$ and $(w,u)$ determine the value of $\mu(c)$:
\begin{align*}
    \mu(c) &\equiv s(u) - s(w) \pmod{3} \\
    &\equiv s(u) +s(u) + s(v) \pmod{3}\ (\mbox{substituting }\eqref{eq:triaddSAW})\\
    &\equiv s(v) + 2s(u)  \pmod{3}\\
    &\equiv s(v) - s(u)  \pmod{3}.
\end{align*}
Therefore the directed edges $(u,v)$ and $(w,u)$ are consistent in how they determine the MV assignment of crease $c$.  Thus $C^{**}$ does not change the number of 3-colorings of $C^*$ nor changes how these colorings biject with valid MV assignments of $C$.  This implies that $C^{**}$ is also a SAW graph of $C$.

\end{proof}

\begin{figure}
    \centering
    \includegraphics[scale=.5]{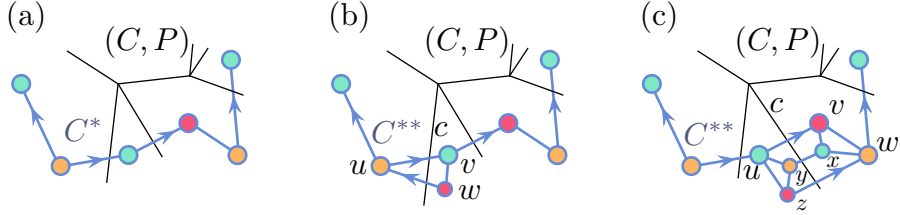}
    \caption{(a) A part of a crease pattern $(C,P)$ and the boundary edges of its SAW graph $C^*$.  (b) Adding a triangle to $C^*$. (c) Adding a triangular prism to $C^*$.}
    \label{fig:sawadd}
\end{figure}

Notice that creating $C^{**}$ according to Lemma~\ref{lem:sawtriangle} gives us a SAW graph that is functionally identical to $C^*$ but whose outer face has been modified.  By adding the triangle, a directed edge has been flipped and an undirected edge has been added next to it on the outer face. Being able to reverse directed edges will help us tile SAW graphs, but we need a way to manage any undirected edges on the boundary that are introduced by such an operation.  The next Lemma provides such a way.

\begin{lemma}\label{lem:triangularprism}
Let $C$ be a crease pattern with a SAW graph $C^{*}$. Let edge $e_1 = (u, v) \in E(C^{*})$ be a boundary edge crossing crease $c$, and edge $e_2 = \{v, w\} \in E(C^{*})$ be an undirected boundary edge adjacent to $e_1$. Then we may attach a triangular prism graph to edges $e_1$ and $e_2$ (as shown in Figure~\ref{fig:sawadd}(c)) to create a graph $C^{**}$ that is also a SAW graph for crease pattern $C$.
\end{lemma}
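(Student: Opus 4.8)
The plan is to follow the template of Lemma~\ref{lem:sawtriangle}: exhibit the attachment explicitly, show it multiplies the number of valid $3$-colorings by $1$, and then check that the directed edge crossing $c$ still encodes $\mu(c)$ correctly. First I would fix notation for the triangular prism drawn in Figure~\ref{fig:sawadd}(c): it has two triangular faces joined by three ``rung'' edges, and I would identify the existing vertices $u$, $v$, $w$ with three of its six vertices (so that the boundary edges $e_1=(u,v)$ and $e_2=\{v,w\}$ become prism edges), introducing three new vertices $x$, $y$, $z$ for the remainder. I would then define $C^{**}$ by $V(C^{**})=V(C^*)\cup\{x,y,z\}$, with edge set equal to that of $C^*$ augmented by the prism edges incident to $x$, $y$, $z$.

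The heart of the argument is showing that attaching the prism does not change the number of proper $3$-colorings with one vertex pre-colored, i.e.\ $|S(C^{**})|=|S(C^*)|$. Given any $s\in S(C^*)$, the colors $s(u),s(v),s(w)$ are already fixed, and I must check that they force a \emph{unique} proper $3$-coloring of the remaining prism vertices $x,y,z$. Unlike a single added triangle, a triangular prism has $12$ proper $3$-colorings in isolation, and pre-coloring a full triangular face leaves two ways to color the opposite face; so the key point is that the attachment places $u,v,w$ \emph{not} as a single face but with two of them on one triangular face and the third being the rung-partner of one of those two. I would verify directly that this configuration propagates through the triangle and rung inequalities to force each of $x,y,z$ uniquely (the opposite face is then the unique color-permutation compatible with all three rungs). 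This forcing step is the main obstacle: it is exactly what guarantees the induced map on colorings is a bijection rather than a $2$-to-$1$ map, and it must be checked for every admissible choice of $s(u),s(v),s(w)$.

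Finally I would check MV consistency and the boundary effect. After the attachment the original edge $\{u,v\}$ is interior (as the excerpt warns, the former boundary directed edges are ``no longer boundary edges''), so I identify the prism edge that now crosses $c$ on the outer face and orient it to serve as the new boundary directed edge. Using that each triangular face satisfies $s(p)+s(q)+s(r)\equiv 0\pmod 3$, the same modular substitution used in the display of Lemma~\ref{lem:sawtriangle} shows that the color difference along this new directed edge equals $s(v)-s(u)\pmod 3$, so it assigns $\mu(c)$ in agreement with $e_1$. The remaining bookkeeping is to confirm what Figure~\ref{fig:sawadd}(c) advertises, namely that along the outer boundary the directed edge crossing $c$ and the undirected edge $e_2$ have exchanged positions, which is the purpose of the prism. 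Since the number of colorings is unchanged and the induced MV assignment of every crease is unchanged, the bijection $f:M(C)\to S(C^*)$ extends to a bijection $M(C)\to S(C^{**})$, proving that $C^{**}$ is a SAW graph for $C$.
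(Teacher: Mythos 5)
Your proposal is correct and follows essentially the same route as the paper: attach the prism with $v,w$ on one triangular face and $u$ as the rung-partner of $v$, show the colors of the three new vertices are uniquely forced (the paper does this by subtracting the two triangle-sum congruences and using that each rung difference is $\pm 1$, so all three rung differences must be equal), and then verify that the relocated directed edge crossing $c$ yields the same difference $s(v)-s(u)$, hence the same $\mu(c)$. Your observation that placing $u,v,w$ on a single face would give a $2$-to-$1$ map is a nice explicit justification of why the attachment must be made the way the figure shows, but the substance of the argument matches the paper's.
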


\begin{proof}
Since $C^{*}$ is a SAW graph of crease pattern $C$, there exists a bijection $f: M(C)\to S(C^{*})$. Let $s \in S(C^{*})$ be an arbitrary proper $3$-coloring of the original SAW graph. To prove that $C^{**}$ is a SAW graph on $C$, we must show that any coloring on $C^{*}$ forces a unique coloring on $C^{**}$, and that this new coloring bijects to the same locally-valid MV assignment $f(s)$ of $C$.
 
Note that both $\{u,y,z\}$ and $\{v,x,w\}$ are triangles. Thus, the colorings on these vertices need to satisfy the following equations:
\begin{align}
    s(u) + s(y) + s(z) \equiv 0 \pmod{3} \\
    s(v) + s(x) + s(w) \equiv 0 \pmod{3}
\end{align}
Subtracting equation $(2)$ from $(3)$ yields the following:
\begin{align}\label{eq:1}
    (s(v) - s(u)) + (s(x) - s(y)) + (s(w) - s(z)) \equiv 0 \pmod{3}
\end{align}
Since we have edges $\{u,v\}, \{y,x\},$ and $\{z,w\}$ in our graph $C^{**}$, none of the three terms in parentheses on the left-hand side of Equation~\eqref{eq:1} can be zero. In fact, we have
\begin{align*}
    0 &\equiv (s(v) - s(u)) + (s(x) - s(y)) + (s(w) - s(z)) \pmod{3} \\
    &\equiv \pm 1 \pm 1 \pm 1
\end{align*}
The only way for this to sum to $0$ mod 3 is if each term in parentheses is $1$ or each is $-1$, thus:
$$ s(v) - s(u) \equiv s(x) - s(y) \equiv s(w) - s(z) \pmod{3}.$$

In fact, the values of $s(u), s(v),$ and $s(w)$ have already been determined by the coloring $s$ on $C^{*}$, so the value of these three differences are forced. We can now solve for the values of the other colorings in terms of the colorings of vertices $u, v, w$:
\begin{align*}
s(x) & \equiv -s(v) - s(w) \\
s(y) & \equiv s(x) + s(u) - s(v) \equiv -2s(v) - s(w) + s(u) \\
s(z) & \equiv s(w) + s(u) - s(v)
\end{align*}
Thus, a coloring on $C^{*}$ forces a coloring of the new vertices in $C^{**}$. We also have that the directed edges $(u,v)$ and $(z,w)$ are consistent in determining the value of $\mu(c)$ since $\mu(c) \equiv s(v) - s(u) \equiv s(w) - s(z)$. So we can now conclude that $C^{**}$ is a SAW graph on crease pattern $C$.

\end{proof}

\begin{figure}
    \centering
    \includegraphics[width=\linewidth]{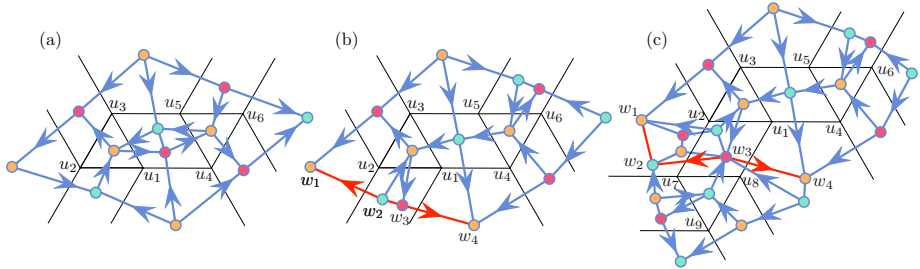}
    \caption{(a) A tiling of SAW graphs for the triangle twist that does not work.  (b) A tiling that does work. (c) Extending this tiling to three triangle twists.}
    \label{fig:tilingex}
\end{figure}

Once again, our goal is to modify the orientation of the boundary edges of any given SAW graph. By adding a triangular prism graph, we swap the places of two adjacent edges, where one is directed and the other is undirected.  This will help us tile SAW graphs.

In fact, when tiling SAW graphs we only want to merge directed boundary edges, not undirected edges. For example, we saw in Figure~\ref{fig:tritwist}(c) how we can create a SAW graph for a triangle twist by inserting a triangle between two of the bird's foot SAW graphs.  That places an undirected edge on this SAW graph for the triangle twist.  Suppose our crease pattern was two joined triangle twists, and we attempted to make a SAW graph for this by merging two directed and the one undirected edge of two triangle twist SAW graphs, as shown in Figure~\ref{fig:tilingex}(a).  This does not work; there are 170 locally-valid MV assignments of the crease pattern, but only 110 proper 3-colorings of this graph (with one vertex pre-colored).  The reason for this discrepancy is that merged undirected  edges made two triangles in the SAW graph that share an edge, and the coloring of these triangles, together with their directed edges, force the creases $\{u_1, u_3\}$ and $\{u_4, u_5\}$ to have the same MV parity, which is not necessary in a locally-valid MV assignment for this crease pattern.

The tiling shown in Figure~\ref{fig:tilingex}(b) does work, since the undirected edges, and thus their triangles, are kept on the boundary of the SAW graph.  But suppose we wanted to add another triangle twist to this crease pattern, as in Figure~\ref{fig:tilingex}(c)?  Then we would need to alter the boundary path $\{w_1, w_2, w_3, w_4\}$ of the SAW graph in Figure~\ref{fig:tilingex}(b), shown in red, to move the undirected edge to an endpoint of this path.  This is done by inserting a triangular prism graph, shown in Figure~\ref{fig:tilingex}(c).  Then a SAW graph for the triangle twist may be tiled onto this red path without interfering with the undirected edges (while making sure that the directed edges around the bird's feet creases are as they should be).

In this way, Lemmas~\ref{lem:sawtriangle} and \ref{lem:triangularprism} allow us to merge, or tile, two SAW graphs along boundary edges no matter what the configuration of directed edges are. We now show that this merging process preserves the bijection within both of the crease patterns. This theorem allows us to match together SAW graphs for single-vertex crease patterns to create SAW graphs for general crease patterns.

\begin{theorem}\label{thm:tiling}
Let $C$ be a crease pattern such that for all vertices $v \in V(C)$, there exists a SAW graph $V^{*}$ where $V$ is the single-vertex crease pattern corresponding to $v$. Then, there exists a SAW graph for the whole crease pattern $C$.
\end{theorem}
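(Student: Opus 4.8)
The plan is to prove this by induction on the number of vertices of $C$, building the SAW graph one vertex at a time and using Lemmas~\ref{lem:sawtriangle} and~\ref{lem:triangularprism} to reconcile boundary edges at each merge. First I would fix an ordering $v_1,\ldots,v_N$ of $V(C)$ so that each $v_i$ with $i\geq 2$ shares at least one crease with the crease pattern $C_{i-1}$ induced by $v_1,\ldots,v_{i-1}$; such an ordering exists whenever the crease-adjacency is connected, and disconnected pieces are handled componentwise. The base case $C_1$ is the single-vertex SAW graph $V_1^*$, which exists by hypothesis, and since every vertex of any sub-crease-pattern $C_{i-1}$ is also a vertex of $C$, the hypothesis of the theorem transfers to $C_{i-1}$. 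The inductive step assumes a SAW graph $C_{i-1}^*$ for $C_{i-1}$ and attaches $V_i^*$ along the creases shared between $v_i$ and $C_{i-1}$.

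The heart of the argument is the merge itself. Along each shared crease $c$, both $C_{i-1}^*$ and $V_i^*$ contribute a directed boundary edge crossing $c$, and I want to identify these into a single directed edge that still determines $\mu(c)$ consistently. This requires the two edges to cross $c$ in the same orientation. When they disagree, I would apply Lemma~\ref{lem:sawtriangle} to flip one of them, at the cost of introducing an undirected boundary edge beside it. To keep these undirected edges from being trapped between two identified directed edges---which is exactly the failure in Figure~\ref{fig:tilingex}(a), where a pair of edge-sharing triangles forces two independent creases to share MV parity---I would use Lemma~\ref{lem:triangularprism} to transpose each undirected edge with its neighboring directed edge and slide it along the boundary until it lands on the portion of the outer face that will remain external in $C_i^*$. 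Since both lemmas preserve the SAW property of the graph they modify, after these adjustments $C_{i-1}^*$ and $V_i^*$ are still SAW graphs for $C_{i-1}$ and $v_i$, every shared crease is crossed by a matching pair of directed edges, and all undirected edges have been pushed out of the gluing region. Identifying each matching pair produces the candidate graph $C_i^*$.

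Next I would verify the SAW conditions and the bijection $M(C_i)\to S(C_i^*)$. Every face of $C_i$ still contains a SAW vertex and every crease is still crossed by a correctly oriented directed edge, so the structural conditions hold. For the bijection, a locally-valid MV assignment $\mu$ of $C_i$ restricts to valid assignments at each vertex, which the per-piece bijections send to colorings of $C_{i-1}^*$ and $V_i^*$; because each shared crease carries a single MV value, the color differences prescribed across the identified directed edge agree, so the two colorings glue to a proper coloring of $C_i^*$. Conversely a proper coloring of $C_i^*$ restricts to proper colorings of each piece, giving valid per-vertex assignments that automatically agree on shared creases, since they read off the same directed edge, and these assemble into a locally-valid $\mu$. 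Bijectivity is inherited from the per-piece bijectivity together with the fact that the gluing is an identification along a single shared value.

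I expect the main obstacle to be the bookkeeping in the merge step: proving that one can \emph{always} reposition every undirected boundary edge out of the gluing region, particularly when $v_i$ shares several consecutive creases with $C_{i-1}$ and the boundary of $C_{i-1}^*$ already carries undirected edges from earlier merges. I would handle this by maintaining an invariant that, before each merge, the undirected boundary edges of the accumulated SAW graph occupy a contiguous arc of the outer face disjoint from the creases about to be glued, and then checking that the triangle and prism operations preserve this invariant. Making the invariant precise and confirming it survives every case of how $v_i$'s shared creases interleave with the existing boundary is where the real work lies.
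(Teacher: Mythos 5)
Your proposal is correct and follows essentially the same route as the paper: induction on the number of vertices, clipping off (or equivalently, adding) one vertex at a time, using Lemma~\ref{lem:sawtriangle} to reverse mismatched directed edges and Lemma~\ref{lem:triangularprism} to push undirected edges out of the gluing region, and then assembling the bijection from the per-piece bijections. The bookkeeping issue you flag at the end is also present in the paper's proof, which handles it at the same level of detail (asserting that undirected edges can be moved to the periphery of the shared region by prism insertions).
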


\begin{proof}
We proceed by induction on the  number of vertices in a crease pattern.

Let $(C,P)$ be a locally flat-foldable crease pattern on a bounded region $P\subset \mathbb{R}^2$ such that for every vertex $v$ in $C$, there exists a SAW graph $V^{*}$ where $V$ is the single-vertex crease pattern corresponding to vertex $v$. Let $u$ be a vertex of $C$ such that at least one of the creases adjacent to $u$ terminates on the boundary of $P$.  Let $\gamma$ be a simple curve starting and ending on different boundary points of $P$ and that crosses only the creases adjacent to $u$ that do not terminate on the boundary of $P$ and crosses no other elements of $C$. Let $\overline{P}=P\setminus \rm{int}(\gamma)$ and let $(\overline{C}, \overline{P})$ be the crease pattern made from $C$ by ``clipping" away the vertex $u$ (where any crease lines adjacent to $u$ and another vertex in $C$ are now creases that terminate on the boundary of $\overline{P}$); see Figure~\ref{fig:tileproof}(a).

\begin{figure}
    \centering
    \includegraphics[width=\linewidth]{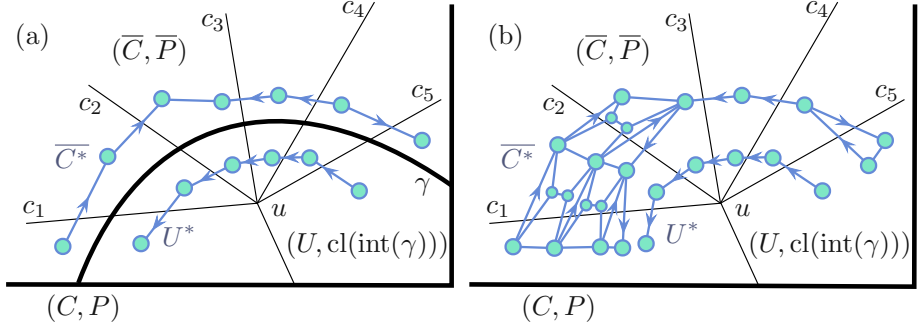}
    \caption{(a) Splitting a crease pattern $(C,P)$ into $(\overline{C},\overline{P})$ and $(U,\mbox{cl(int(}\gamma)))$. (b) The triangular prisms and triangles needed to add to $\overline{C^*}$ to make its boundary match that of $U^*$.}
    \label{fig:tileproof}
\end{figure}

Our graph $\overline{C}$ has one less vertex, thus we can say that it has a valid SAW graph $\overline{C^{*}}$ by our inductive hypothesis. So there exists a bijection $f': S(\overline{C^{*}}) \to M(\overline{C})$. For our single vertex $u$, let $(U, \textrm{cl(int}(\gamma)))$ be its crease pattern (where cl$(A)$ is the closure of the set $A$). We also know that it has a valid SAW graph $U^{*}$. 
We call the bijection $f_u: S(U^{*}) \to M(U)$. 

We now proceed by constructing the graph $C^{*}$ using both SAW graphs $\overline{C^{*}}$ and $U^{*}$. Let $c_1, c_2, \dotsc, c_k$ be the creases shared by the two crease patterns $\overline{C}$ and $U$. Let $B$ be the union of all faces in our crease pattern $C$ crossed by the simple curve $\gamma$. This is the region shared between the two crease patterns $\overline{C}$ and $U$, which is what will focus on. We want to merge all boundary edges in this region $B$ for both SAW graphs $\overline{C^{*}}$ and $U^{*}$. To preserve the bijection in both crease patterns, we need that the boundary along this region of both SAW graphs has no undirected edges and two edges crossing the same crease have the same orientation. To do this, we use Lemmas~\ref{lem:sawtriangle} and \ref{lem:triangularprism} to modify the boundary edges of $\overline{C^*}$ that lie in $B$  by first inserting triangles to reverse any directed edges crossing a crease (say $c_i$) that do not have the same orientation as the corresponding directed edge of $U^*$ (that also crosses $c_i$).  Then any undirected edges of this boundary of $\overline{C^*}$ are moved to the periphery of $B$ by inserting triangular prisms.  Finally, any undirected edges of $U^*$ that lie in $B$ are also moved to the periphery by inserting triangular prisms.  

Then we may let $C^*$ be the graph obtained by merging our modified $\overline{C^*}$ and $U^*$ graphs along the region $B$ created by our curve $\gamma$.  As seen in Lemmas~\ref{lem:sawtriangle} and \ref{lem:triangularprism}, these modified versions of $\overline{C^*}$ and $U^*$ also serve as SAW graphs for $\overline{C}$ and $U$, respectively, and thus use the same functions $f'$ and $f_u$ to biject from colorings to MV assignments.  We claim that we can create a new function $f:S(C^*)\to M(C)$ by combining $f'$ and $f_u$ as follows:  For any $s\in S(C^*)$, let $s'$ be the coloring this creates for $\overline{C^*}$ and $s_u$ be the coloring this places on the vertices in $U^*$.  Then we let $f(s)$ be the MV assignment on $C$ that uses $f'(s')$ for all creases $c\in \overline{C}$ and $f_u(s_u)$ for all creases in $U$.  Note that this definition for $f$ is consistent on all creases $c_i\in \overline{C}\cap U$ because we made sure that the modified SAW graphs for $\overline{C^*}$ and $U^*$ have directed edges with the same orientation crossing $c_i$.  Thus $f$ is well-defined and $f(s)\in  M(C)$ for any $s\in S(C^*)$.  

That $f:S(C^*)\to M(C)$ is a bijection follows from the fact that $f'$ and $f_u$ are bijections: If $\mu\in M(C)$, then it induces MV  assignments $\overline{\mu}\in M(\overline{C})$ and $\mu_u\in M(U)$.  Then  there exist $s_1\in S(\overline{C^*})$ with $f'(s_1)=\overline{\mu}$ and $s_2\in S(U^*)$ with  $f_u(s_2)=\mu_u$.  Because the merged boundary edges of $\overline{C^*}$ and $U^*$ have the  same orientation, the colorings $s_1$ and $s_2$ can be combined to give a coloring $s\in S(C^*)$ with $f(s)=\mu$, proving surjectivity of $f$.  If $f(s_1)=f(s_2)$ for some colorings $s_1,s_2\in S(C^*)$, then we can similarly split the MV assignments $f(s_1)$ and $f(s_2)$ to induce MV assignments on $\overline{C}$ and $U$ and use $f'$ and $f_u$ to show that the colorings $s_1$ and $s_2$ are the same, proving injectivity.

\end{proof}

A very popular class of flat-foldable origami crease patterns for engineering applications are those that contain only degree-4 vertices \cite{Evans}.  Theorem~\ref{thm:tiling} together with the degree-4 SAW graphs from Figure~\ref{fig:deg4} give us the following corollary.

\begin{corollary}
Every finite, flat-foldable crease pattern that is 4-regular has a SAW graph.
\end{corollary}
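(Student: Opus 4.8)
The plan is to combine the single-vertex classification of degree-4 flat-foldable vertices (Theorem~\ref{thm:deg4}) with the tiling result (Theorem~\ref{thm:tiling}), so that essentially no new machinery is required. First I would observe that since $C$ is 4-regular, every interior vertex $v\in V(C)$ has exactly four adjacent creases, so the single-vertex crease pattern $V$ corresponding to $v$ is a degree-4 flat-foldable vertex. The entire task then reduces to verifying the hypothesis of Theorem~\ref{thm:tiling}: that each such $V$ admits a single-vertex SAW graph $V^*$.

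Next I would establish that every degree-4 flat-foldable vertex falls into exactly one of the three types depicted in Figure~\ref{fig:deg4}. By Kawasaki's Theorem, a degree-4 vertex with sector angles $\alpha_1,\alpha_2,\alpha_3,\alpha_4$ in cyclic order is flat-foldable if and only if $\alpha_1-\alpha_2+\alpha_3-\alpha_4=0$, i.e.\ $\alpha_1+\alpha_3=\alpha_2+\alpha_4=180^\circ$. A short case analysis on the angle multiset then yields precisely three possibilities, distinguished by the first local minimum of consecutive equal angles encountered in the Big-Little-Big recursion. If there is a unique strictly smallest angle we are in the BLB case ($k=1$). If not, the supplementarity forced by Kawasaki rules out two opposite acute angles, so any acute pair must be adjacent, giving two congruent acute and two congruent obtuse angles with a local minimum of two equal angles ($k=2$) --- the bird's foot. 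The only remaining case is all four angles equal to $90^\circ$. In each case Theorem~\ref{thm:deg4} supplies the required SAW graph $V^*$.

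With a single-vertex SAW graph now in hand for every vertex of $C$, the hypothesis of Theorem~\ref{thm:tiling} is met, and a single application of that theorem produces a SAW graph for the whole crease pattern $C$. Since $C$ was an arbitrary finite, 4-regular, flat-foldable crease pattern, this completes the argument.

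The only step requiring genuine care, as opposed to bookkeeping, is the exhaustiveness of the three-type classification: I would want to be explicit that the Kawasaki constraint forces opposite angles to be supplementary, and then rule out degenerate configurations (for example, two distinct pairs of equal angles that would still present a \emph{unique} local minimum to the Big-Little-Big recursion and thereby reduce to the BLB gadget). Everything beyond this is direct citation --- Theorem~\ref{thm:deg4} handles the three base gadgets and Theorem~\ref{thm:tiling} handles the assembly --- so I do not anticipate any substantive obstacle.
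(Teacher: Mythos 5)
Your proposal is correct and follows exactly the route the paper intends: the corollary is stated as an immediate consequence of Theorem~\ref{thm:deg4} (SAW graphs for the three degree-4 vertex types) together with Theorem~\ref{thm:tiling}, with no further argument given. Your additional verification that Kawasaki's condition forces every flat-foldable degree-4 vertex into one of the three types (unique minimum, two adjacent equal minima, or all angles $90^\circ$) is a detail the paper asserts without proof, and your case analysis of it is sound.
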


As a final example, we consider the classic crane model, also known as the flapping bird, shown in Figure~\ref{fig:crane}.  The SAW graph for this crease pattern is made by merging numerous SAW graphs for degree-4 vertices, although note that to do this we need to use alternate orientations of the arrows from those of Figure~\ref{fig:deg4}(c), as explained in Remark~\ref{remark1}.  Also, the center vertex of the crane's crease pattern is the waterbomb vertex, which we dealt with as we did in the snake tessellation (Figure~\ref{fig:snake}(a)).  Entering this SAW graph into {\em Mathematica} or Sage reveals that it has 93,313 ways to properly 3-color the vertices with one vertex pre-colored.

\begin{figure}
    \centering
    \includegraphics[scale=.5]{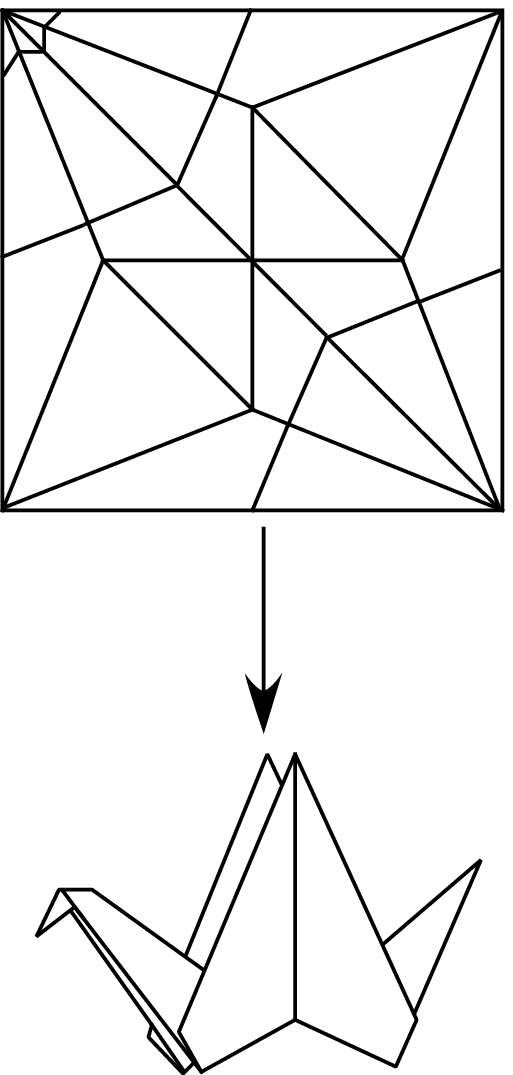} \hspace{.1in}
    \includegraphics[scale=.3]{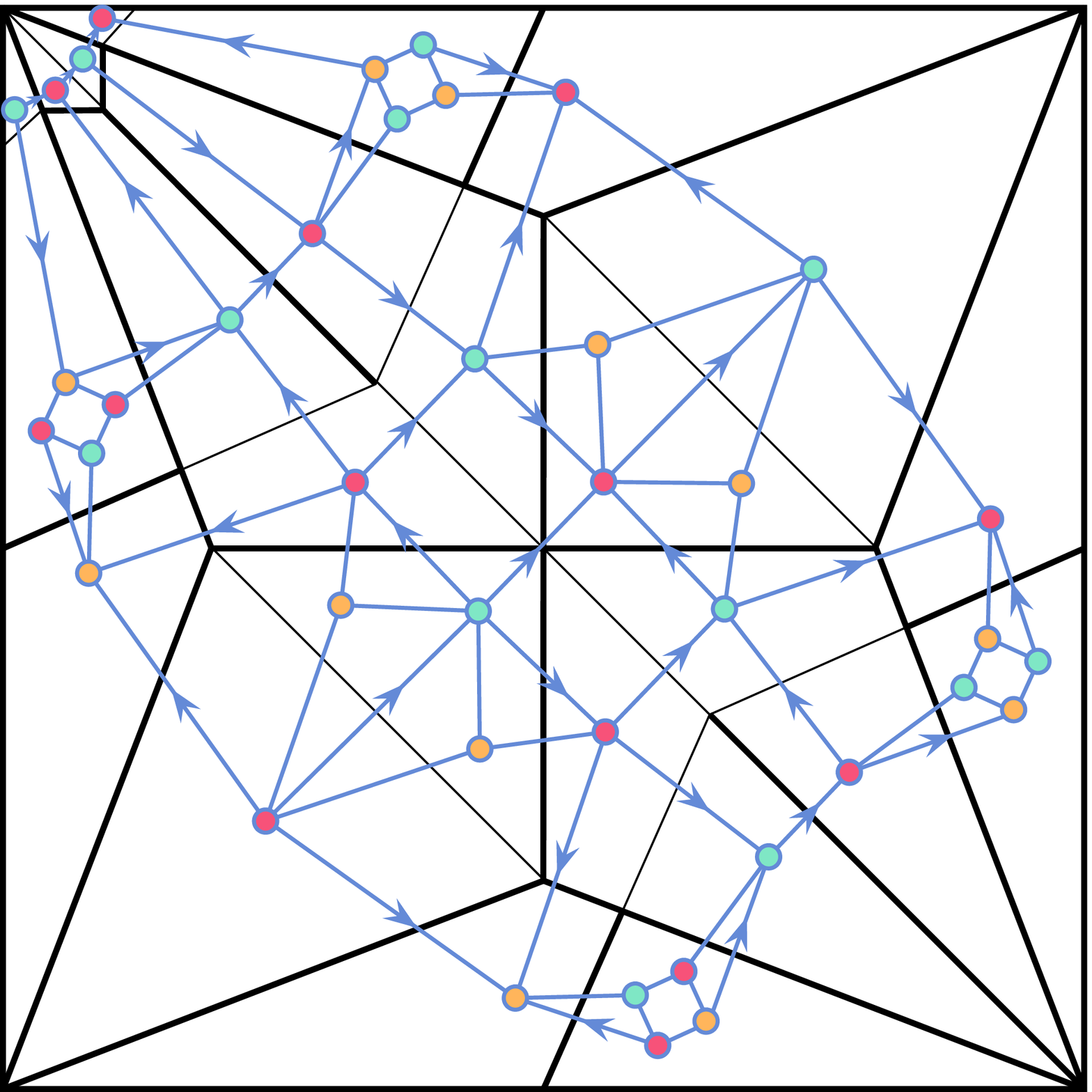}
    \caption{The classic crane (aka flapping bird) origami model, with its crease pattern and SAW graph. Letting the yellow, green, and red vertices be 0, 1, and 2 respectively gives the MV assignment shown.}
    \label{fig:crane}
\end{figure}

\section{Conclusion}

We have shown that a wide class of flat-foldable crease patterns have SAW graphs, and therefore their locally-valid MV assignments can be enumerated using graph coloring algorithms.  Note, however, that we do not yet have SAW graphs for flat-foldable vertices that (a) are strictly $m$-nice for $m>3$ and (b) have all equal sector angles and degree greater than 4.  While further work is required to expand these results beyond 3-nice vertices, this provides very strong evidence that the basic combinatorial structure underlying locally-valid MV assignments in flat origami is 3-colorings of graphs.  

Furthermore, the prior work of \cite{miura} that proved Miura-ori  foldings were equivalent to 3-coloring grid graphs inspired and gave a valuable tool for studying flat-foldable origami from a statistical mechanics perspective (see \cite{Assis}), creating exactly solvable models that show phase transitions exist in some origami tessellations, like the Miura-ori.  The larger family of 3-colorable graphs that we present in this paper could help expand such statistical mechanics results for wider assortments of crease patterns.

\section{Acknowledgments}

This material is based upon work supported by the National Science Foundation under Grant Number DMS-1851842, the MathILy-EST Research Experience for Undergraduates, under the direction in 2019 of the third author. Any opinions, findings, and conclusions or recommendations expressed in this material are those of the authors and do not necessarily reflect the views of the National Science Foundation.  The authors wish to thank Bryn Mawr College for providing excellent hospitality and facilities for conducting this work.


\end{document}